\numberwithin{equation}{section}
\newtheorem{remark}{Remark}[section]
\newtheorem{example}{Example}[section]
\newtheorem{scheme}{Scheme}
\title{ An enhanced and highly efficient semi-implicit combined Lagrange multiplier approach with preserving original energy law
 for dissipative systems.
        \thanks{
We would like to acknowledge the assistance of volunteers in putting together this example manuscript and supplement. This work is supported by National Natural Science Foundation of China (Grant Nos: 12001336, 12271302, 12131014) and Natural Science Outstanding Youth Fund of Shandong Province (Grant No: ZR2023YQ007). N. Zheng is partially supported by the Hong Kong Polytechnic University Postodoctoral Research Fund 1-W22P.}}
      \author{Zhengguang Liu
             \thanks{School of Mathematics and Statistics, Shandong Normal University, Jinan, Shandong, 250358, China. Email: liuzhg@sdnu.edu.cn.}
             \and
         	 Nan Zheng$^1$ 
         	 \thanks{Department of Applied Mathematics, The Hong Kong Polytechnic University, Hung Hom, Hong Kong. Email: nanzheng@polyu.edu.hk.}
             \and
             Xiaoli Li$^1$
             \thanks{School of Mathematics, Shandong University, Jinan, Shandong, 250100, China. Email: xiaomath@sdu.edu.cn.}
             }     
\begin{document}
\footnotetext[1]{Corresponding authors.} 
\maketitle

\begin{abstract}
Recently, a new Lagrange multiplier approach was introduced by Cheng, Liu and Shen in \cite{cheng2020new}, which has been broadly used to solve various challenging phase field problems. To design original energy stable schemes, they have to solve a nonlinear algebraic equation to determine the introduced Lagrange multiplier, which can be computationally expensive, especially for large-scale and long-time simulations involving complex nonlinear terms. This paper presents an essential improved technique to modify this issue, which can be seen as a semi-implicit combined Lagrange multiplier approach. In general, the new constructed schemes keep all the advantages of the Lagrange multiplier method and significantly reduce the computation costs. Besides, the new proposed BDF2 scheme dissipates the original energy, as opposed to a modified energy for the classical Lagrange multiplier approach in \cite{cheng2020new}. We further construct high-order BDF$k$ schemes based on the new proposed approach. In addition, we establish a general framework for extending our constructed method to dissipative systems.  Finally several examples have been presented to demonstrate the effectiveness of the proposed approach.
\end{abstract}

\begin{keywords}
Lagrange multiplier, dissipative systems, original energy stable, improved technique, computation costs.
\end{keywords}

    \begin{AMS}
         65M12; 35K20; 35K35; 35K55; 65Z05
    \end{AMS}

\pagestyle{myheadings}
\thispagestyle{plain}
\markboth{Zhengguang Liu, Nan Zheng, Xiaoli Li} {A semi-implicit combined Lagrange multiplier approach}
\section{Introduction}
Dissipative systems are indeed widespread in various practical problems. The design of efficient and accurate energy stable schemes for nonlinear dissipative systems, such as phase field models, has been a subject of extensive research in the past few decades. Phase field models are derived from mathematical descriptions of interface behavior in multi-phase materials. These models have found wide applications in interface problems such as fluid dynamics, environmental science, and mechanics of materials, etc \cite{chen2002phase,keller1970initiation,osher1988fronts,rudin1992nonlinear}. The typical applications include the synthesis of advanced composite material, the complex multi-phase fluid, integrated circuits, lithium-ion batteries, 3D printing, etc \cite{fallah2012phase,lowen2010phase,wang2020application,zuo2015phase}. Phase field models, including the Allen-Cahn equation \cite{feng2003numerical,ilmanen1993convergence,shen2010numerical}, Cahn-Hilliard equation \cite{cahn1958free,cahn1959free,eyre1998unconditionally},  molecular beam epitaxy models \cite{cho1975molecular,joyce1985molecular,yang2017numerical}, phase field crystal models \cite{elder2002modeling,wise2009energy,WuPhase}, are widely used in various applications.

We will first present our numerical schemes to simulate the phase field models and further extend it to general dissipative systems. In general, the phase field models are dynamically driven by a free energy $E(\phi)$, and take the following gradient flow form:
\begin{equation}\label{intro-e1}
\displaystyle\frac{\partial \phi}{\partial t}=-\mathcal{G}\frac{\delta E}{\delta\phi},
\end{equation}
with periodic or homogeneous Neumann boundary condition, and $E(\phi)=\frac{1}{2}(\mathcal{L}\phi,\phi)+\int_\Omega F(\phi(\textbf{x}))d\textbf{x}$, $\mathcal{G}$ and $\mathcal{L}$ are both positive definite operators.  

One can easily to find that the phase field models \eqref{intro-e1} satisfy a dissipative energy law:
\begin{equation*}
\displaystyle\frac{d}{dt}E=\left(\frac{\delta E}{\delta\phi},\frac{\partial \phi}{\partial t}\right)=-\left(\frac{\delta E}{\delta\phi},\mathcal{G}\frac{\delta E}{\delta\phi}\right)\leq0.
\end{equation*}

Due to the complex properties of the phase field models such as high-order nonlinearities and physical constraints, it is still quite a challenging interdisciplinary project to study the application of phase field models in efficient simulations of the interface evolution in complex physical process. Many scholars have tried many approaches to develop efficient, easy-to-implement, energy stable numerical schemes to accurately capture the dynamics of interface singularities as well as the micro-structures for the derived multi-phase complex material systems. The classical approaches are the fully explicit scheme and fully implicit scheme \cite{du1991numerical,feng2004error}. The full explicit scheme, while simple, has very strict time step limitations. For fully implicit scheme, the nonlinear problems have to be solved in each time step. Meanwhile, the existence and uniqueness of the solution usually have strong restrictions on the time step, which limits its wide applicability. For more efficient long-time numerical simulations, the semi-implicit scheme is a good choice.  The more widely used and effective methods mainly include convex splitting methods \cite{baskaran2013convergence,eyre1998unconditionally}, stabilized semi-implicit methods \cite{chen1998applications,shen2010numerical,xu2006stability}, exponential time-differencing (ETD) methods \cite{du2019maximum,du2021maximum,ju2018energy}, invariant energy quadratization (IEQ) methods \cite{yang2018linear,yang2017numerical,zhao2017numerical}, scalar auxiliary variable (SAV) methods \cite{huang2020highly,shen2018scalar,ShenA}, Lagrange multiplier methods \cite{cheng2020new,cheng2022new}, etc.

Recently, the SAV approach introduced by Shen et al. \cite{shen2018scalar,ShenA} has been attracted much attention in numerical solutions for phase field models due to its inherent advantage of preserving energy dissipation law. However, the unconditional energy stability is with respect to a modified energy according to the auxiliary variables instead of the original variables. To design unconditionally energy stable schemes with the original energy, Cheng, Liu and Shen \cite{cheng2020new} introduce a new Lagrange multiplier approach for gradient flows. Compared with the SAV approach, the new Lagrange multiplier enjoys two additional advantages: (i) the numerical schemes satisfy an original energy dissipation law; (ii) they do not require an assumption that the nonlinear part of the free energy to be bounded from below. However, the trade-off for these advantages is that a nonlinear algebraic equation for the Lagrange multiplier needs to be solved essentially, which significantly increases the computational costs, especially for phase field models requiring long-time simulations. The Newton iteration method is particularly inefficient for models with non-algebraic type nonlinear terms.

The main purpose of this paper is to construct a new semi-implicit combined Lagrange multiplier approach to improve its efficiency in computational costs. compared with the classical Lagrange multiplier method in \cite{cheng2020new}, the new proposed method enjoys the following advantages:

$\bullet$ It is unconditionally energy stable with respect to the original free energy, and provides accuracy and efficiency comparable to the classical Lagrange multiplier approach;

$\bullet$ It significantly reduces the computation costs compared to with the classical Lagrange multiplier approach;

$\bullet$ The BDF2 schemes based on the new approach dissipate the original energy, as opposed to a modified energy in \cite{cheng2020new};

$\bullet$ We construct several high-order BDF$k$ schemes based on the new proposed approach and establish a general framework for extending our constructed method to dissipative systems. 

The rest of this paper is organized as follows. In Section 2, we provide a brief review of the new Lagrange multiplier approach introduced by Cheng, Liu and Shen \cite{cheng2020new}. In Section 3, we present the second-order Crank-Nicloson and BDF2 schemes based on a new semi-implicit combined Lagrange multiplier approach. All discrete schemes are proved the unconditional energy stability. A high-order BDF$k$ scheme based on the improved technique is considered in Section 4. In Section 5, we further extend the proposed method to the general dissipative systems. In Section 6, we give some comparisons of the proposed approach with the new Lagrange multiplier approach to validate its high efficiency.

\section{A brief review of the new Lagrange Multiplier approach}
In this section, we give a brief review of the new Lagrange Multiplier approach for phase field models \eqref{intro-e1} to better introduce our newly proposed methods. 

Introduce a scalar auxiliary function $\eta(t)$ with $\eta(0)=1$, and reformulate the phase field models \eqref{intro-e1} as the following:
\begin{equation*}
   \begin{array}{l}
\displaystyle\frac{\partial \phi}{\partial t}=-\mathcal{G}\mu,\\
\mu=\mathcal{L}\phi+\eta(t)F'(\phi),\\
\displaystyle\frac{d}{dt}\int_\Omega F(\phi)d\textbf{x}=\eta(t)(F'(\phi),\phi_t).
   \end{array}
  \end{equation*}
Taking the inner products of the first two equations in the above with $\mu$ and $-\phi_t$ respectively, summing up the results together with the third equation, one can obtain the following original energy dissipation law:
\begin{equation*}
\displaystyle\frac{d}{dt}E=-(\mathcal{G}\mu,\mu).
\end{equation*}

Next, we give a second-order Crank–Nicolson scheme for above equivalent phase field models:
\begin{scheme}
\begin{equation}\label{nLM-e1}
   \begin{array}{l}
\displaystyle\frac{\phi^{n+1}-\phi^n}{\Delta t}=-\mathcal{G}\mu^{n+\frac12},\\
\displaystyle\mu^{n+\frac12}=\frac12\mathcal{L}\phi^{n+1}+\frac12\mathcal{L}\phi^{n}+\eta^{n+\frac12}F'(\phi^{*,n+\frac12}),\\
\displaystyle(F(\phi^{n+1}),1)-(F(\phi^{n}),1)=\eta^{n+\frac12}(F'(\phi^{*,n+\frac12}),\phi^{n+1}-\phi^n),
   \end{array}
  \end{equation}  
where $\phi^{*,n+\frac12}=\frac32\phi^n-\frac12\phi^{n-1}$ for $n\geq1$.
\end{scheme}

Taking the inner products of the first two equations in the above scheme \eqref{nLM-e1} with $\Delta t\mu^{n+\frac12}$ and $-(\phi^{n+1}-\phi^n)$,
summing up the results together with the third equation, we obtain immediately the following energy dissipation law:
\begin{equation*}
\displaystyle\left[\frac12(\mathcal{L}\phi^{n+1},\phi^{n+1})+(F(\phi^{n+1}),1)\right]-\left[\frac12(\mathcal{L}\phi^{n},\phi^{n})+(F(\phi^{n}),1)\right]=-\Delta t(\mu^{n+\frac12},\mathcal{G}\mu^{n+\frac12}).
\end{equation*}

We now show how to solve the scheme \eqref{nLM-e1} efficiently. Substituting the second equation into the first equation for scheme \eqref{nLM-e1}, we can obtain the following linear matrix equation
\begin{equation*}
\aligned
(I+\frac12\Delta t\mathcal{G}\mathcal{L})\phi^{n+1}=(I-\frac12\Delta t\mathcal{G}\mathcal{L})\phi^{n}-\eta^{n+\frac12}\Delta t\mathcal{G}F'(\phi^{*,n+\frac12}).
\endaligned
\end{equation*}
Setting $\phi^{n+1}=\phi_1^{n+1}+\eta^{n+\frac12}q^{n+1}$, one can find that $\phi_1^{n+1}$ and $q^{n+1}$ are solutions of the following two linear equations with constant coefficients:
\begin{equation}\label{nLM-e2}
\aligned
(I+\frac12\Delta t\mathcal{G}\mathcal{L})\phi_1^{n+1}=(I-\frac12\Delta t\mathcal{G}\mathcal{L})\phi^{n},\quad (I+\frac12\Delta t\mathcal{G}\mathcal{L})q^{n+1}=-\Delta t\mathcal{G}F'(\phi^{*,n+\frac12}).
\endaligned
\end{equation}
Once $\phi_1^{n+1}$ and $q^{n+1}$ are known, we can determine $\eta^{n+\frac12}$ by the following nonlinear algebraic equation:
\begin{equation}\label{nLM-e3}
\displaystyle\left(F(\phi_1^{n+1}+\eta^{n+\frac12}q^{n+1})-F(\phi^{n}),1\right)=
\eta^{n+\frac12}\left({F'}(\phi^{*,n+\frac12}),\phi_1^{n+1}+\eta^{n+\frac12}q^{n+1}-\phi^n\right).
\end{equation} 
\begin{remark}
The complexity of the nonlinear equation \eqref{nLM-e3} depends on $F(\phi)$. If $F(\phi)$ is a polynomial function such as double well potential, the equation \eqref{nLM-e3} will be an algebraic equation of $\eta^{n+\frac12}$. Using a Newton iteration with 1 as the initial condition will be very high efficiency. However, if $F(\phi)$ is not a polynomial function such as $F(\phi)=-\frac12\ln(1+|\nabla \phi|^2)$ in the molecular beam epitaxial model, the Newton iteration will become inefficient which maybe obtain incorrect iterative solution.  
\end{remark}
\section{A novel semi-implicit combined Lagrange multiplier approach}
From above introduction of new Lagrange Multiplier approach in Section 2, one can find that we need to solve a nonlinear algebraic equation about $\eta^{n+\frac12}$ in advance to obtain $\phi^{n+1}$ at each time step. This may spend a large computational costs in long time numerical simulations. In this section, we consider an improved technique to modify this issue, which can be seen as a semi-implicit combined Lagrange Multiplier appraoch. Before introducing the method to be proposed, we give a novel Lagrange multiplier approach which is essentially equivalent with the introduced new Lagrange multiplier approach, more detailed can be seen in \cite{liu2022novel}.
  
Introduce a zero-factor Lagrange Multiplier as:
\begin{equation}\label{LM-e2}
\eta(t)=\frac{d}{dt}\left[(F(\phi),1)-(F(\phi),1)\right]=\frac{d}{dt}\int_\Omega F(\phi)d\textbf{x}-\int_\Omega F'(\phi)\phi_td\textbf{x}=0.
\end{equation}
Then one can rewrite the phase field models \eqref{intro-e1} with $\eta(t)$ as follows:
\begin{equation*}
   \begin{array}{l}
\displaystyle\frac{\partial \phi}{\partial t}=-\mathcal{G}\mu,\\
\mu=\mathcal{L}\phi+F'(\phi)+\eta(t)F'(\phi).
   \end{array}
  \end{equation*}
In order to keep the energy dissipative law of the equivalent system and noting $\eta(t)=0$ at the continuous level, we then let $\eta(t)=\eta(t)\int_\Omega F'(\phi)\phi_td\textbf{x}$ to obtain the following equation:
\begin{equation*}
\aligned
\displaystyle\frac{d}{dt}\int_\Omega F(\phi)d\textbf{x}
&=\displaystyle\int_\Omega F'(\phi)\phi_td\textbf{x}+\eta(t)\\
&=\displaystyle\int_\Omega F'(\phi)\phi_td\textbf{x}+\eta(t)\int_\Omega F'(\phi)\phi_td\textbf{x}.
\endaligned
\end{equation*}
Therefore, we can easily obtain the following equivalent systems of the original phase field models \eqref{intro-e1}:
\begin{equation}\label{LM-e3}
   \begin{array}{l}
\displaystyle\frac{\partial \phi}{\partial t}=-\mathcal{G}\mu,\\
\mu=\mathcal{L}\phi+F'(\phi)+\eta(t)F'(\phi),\\
\displaystyle\frac{d}{dt}\int_\Omega F(\phi)d\textbf{x}=\displaystyle\int_\Omega F'(\phi)\phi_td\textbf{x}+\eta(t)\int_\Omega F'(\phi)\phi_td\textbf{x}.
   \end{array}
  \end{equation}
Taking the inner products of the first two equations in the above with $\mu$ and $-\phi_t$ respectively, summing up the results together with the third equation, one can also obtain the following original energy dissipation law:
\begin{equation*}
\displaystyle\frac{d}{dt}E=-(\mathcal{G}\mu,\mu).
\end{equation*}
\subsection{Second-order Crank-Nicolson scheme}\label{s3.1}
In this subsection, we will give a modified Lagrange multiplier approach for the equivalent system \eqref{LM-e3}. Similar as \eqref{nLM-e1}, a classic Lagrange multiplier second-order scheme based on Crank-Nicolson (CN) discretization can be constructed as follows:
\begin{equation}\label{im-LM-e1}
   \begin{array}{l}
\displaystyle\frac{\phi^{n+1}-\phi^n}{\Delta t}=-\mathcal{G}\mu^{n+\frac12},\\
\displaystyle\mu^{n+\frac12}=\frac12\mathcal{L}\phi^{n+1}+\frac12\mathcal{L}\phi^{n}+F'(\phi^{*,n+\frac12})+\eta^{n+1/2}F'({\phi}^{*,n+\frac12}),\\
\displaystyle(F(\phi^{n+1}),1)-(F(\phi^{n}),1)=\displaystyle\left(F'({\phi}^{*,n+\frac12}),\phi^{n+1}-\phi^n\right)+\eta^{n+1/2}\left(F'({\phi}^{*,n+\frac12}),\phi^{n+1}-\phi^n\right),
   \end{array}
  \end{equation}
where ${\phi}^{*,n+\frac12}=\frac32\phi^n-\frac12\phi^{n-1}$.

Taking the inner products of the first two equations in the above scheme \eqref{im-LM-e1} with $\Delta t\mu^{n+\frac12}$ and $-(\phi^{n+1}-\phi^n)$,
summing up the results together with the third equation, we can also obtain immediately the following original energy dissipation law:
\begin{equation*}
\displaystyle\left[\frac12(\mathcal{L}\phi^{n+1},\phi^{n+1})+(F(\phi^{n+1}),1)\right]-\left[\frac12(\mathcal{L}\phi^{n},\phi^{n})+(F(\phi^{n}),1)\right]=-\Delta t(\mu^{n+\frac12},\mathcal{G}\mu^{n+\frac12}).
\end{equation*}

The above second-order scheme \eqref{im-LM-e1} is nonlinear for the variables $\phi^{n+1}$ and $\eta^{n+1}$. We now show how to solve it efficiently. Combining the first two equations in \eqref{im-LM-e1}, we can obtain the following linear matrix equation
\begin{equation*}
\aligned
(I+\frac12\Delta t\mathcal{G}\mathcal{L})\phi^{n+1}=(I-\frac12\Delta t\mathcal{G}\mathcal{L})\phi^{n}-\Delta t\mathcal{G}F'({\phi}^{*,n+\frac12})-\eta^{n+\frac12}\Delta t\mathcal{G}F'({\phi}^{*,n+\frac12}).
\endaligned
\end{equation*}

Noting that the coefficient matrix $A=(I+\frac12\Delta t\mathcal{G}\mathcal{L})$ is a symmetric positive matrix, then we obtain
\begin{equation}\label{im-LM-e2}
\aligned
\phi^{n+1}
&=A^{-1}\left[(I-\frac12\Delta t\mathcal{G}\mathcal{L})\phi^{n}-\Delta t\mathcal{G}F'({\phi}^{*,n+\frac12})\right]-\eta^{n+\frac12}\Delta tA^{-1}\mathcal{G}F'({\phi}^{*,n+\frac12})\\
&=\overline{\phi}^{n+1}+\eta^{n+\frac12}q^{n+1},
\endaligned
\end{equation}
Here $\overline{\phi}^{n+1}$ and $q^{n+1}$ can be solved directly by $\phi^n$ and ${\phi}^{*,n+\frac12}$ as follows:
\begin{equation}\label{im-LM-e3}
\aligned
\overline{\phi}^{n+1}=A^{-1}\left[(I-\frac12\Delta t\mathcal{G}\mathcal{L})\phi^{n}-\Delta t\mathcal{G}F'({\phi}^{*,n+\frac12})\right],\quad q^{n+1}=-\Delta tA^{-1}\mathcal{G}F'({\phi}^{*,n+\frac12}).
\endaligned
\end{equation}
\begin{remark}\label{remark1}
The intermediate variable $\overline{\phi}^{n+1}$ is the solution of the classic semi-implicit scheme. Thus from equation $\phi^{n+1}=\overline{\phi}^{n+1}+\eta^{n+\frac12}q^{n+1}$, one can know that the solution of Lagrange multiplier scheme \eqref{im-LM-e1} is actually a solution modification of the classic semi-implicit scheme. 
\end{remark}

To reduce the computational costs while maintaining unconditionally original energy dissipation law,  we can modify the classic Lagrange multiplier scheme \eqref{im-LM-e1} as follows:

Given $\phi^n$, $\phi^{n-1}$, we compute $\phi^{n+1}$, $\eta^{n+\frac12}$ via the following three steps:
\begin{scheme}

\textbf{Step I}: Calculate the intermediate solution $\overline{\phi}^{n+1}$ from the following classic semi-implicit CN scheme:
\begin{equation}\label{im-LM-e4}
   \begin{array}{l}
\displaystyle\frac{\overline{\phi}^{n+1}-\phi^n}{\Delta t}=-\mathcal{G}\mu^{n+\frac12},\\
\displaystyle\mu^{n+\frac12}=\frac12\mathcal{L}\overline{\phi}^{n+1}+\frac12\mathcal{L}\phi^{n}+F'({\phi}^{*,n+\frac12})
   \end{array}
  \end{equation}
where ${\phi}^{*,n+\frac12}=\frac32\phi^n-\frac12\phi^{n-1}$.

\textbf{Step II}: compute the Lagrange multiplier variable $\eta^{n+1/2}$ as follows:
 \begin{equation}\label{im-LM-e5}
\eta^{n+\frac12}=\left\{
   \begin{array}{ll}
0,\quad E(\overline{\phi}^{n+1})\leq E(\phi^n),\\
\overline{\eta}^{n+\frac12},\quad E(\overline{\phi}^{n+1})>E(\phi^n).
   \end{array}
   \right.
\end{equation} 
where $E(\phi)=\frac12(\mathcal{L}\phi,\phi)+(F(\phi),1)$ and $\overline{\eta}^{n+\frac12}$ can be obtained as follows:
\begin{equation}\label{im-LM-e6}
\aligned
&\left(F(\overline{\phi}^{n+1}+\overline{\eta}^{n+\frac12}q^{n+1})-F(\phi^{n}),1\right)=
\left[1+\overline{\eta}^{n+\frac12}\right]\left({F'}({\phi}^{*,n+\frac12}),\overline{\phi}^{n+1}+\overline{\eta}^{n+\frac12}q^{n+1}-\phi^n\right),
\endaligned
\end{equation}
where $q^{n+1}$ can be solved directly by $\phi^n$ and ${\phi}^{*,n+\frac12}$ as follows:
\begin{equation}\label{im-LM-e7}
\aligned 
q^{n+1}=-\frac12\Delta t\mathcal{G}\mathcal{L}q^{n+1}-\Delta t\mathcal{G}F'({\phi}^{*,n+\frac12}).
\endaligned
\end{equation}

\textbf{Step III}: Update $\phi^{n+1}$ as
\begin{equation}\label{im-LM-e8}
\phi^{n+1}=\overline{\phi}^{n+1}+\eta^{n+\frac12}q^{n+1}.
\end{equation}  
\end{scheme}

Noting that the value of $\eta^{n+\frac12}$ has no influence of the energy dissipation law, then the scheme \eqref{im-LM-e4}-\eqref{im-LM-e8} still holds the following original energy stability:
\begin{theorem}\label{im-LM-theorem1}
The second-order Crank-Nicloson semi-implicit combined Lagrange multiplier scheme \eqref{im-LM-e4}-\eqref{im-LM-e8} is unconditionally energy stable in the sense that
\begin{equation*}
\displaystyle E^{n+1}-E^n=\left[\frac12(\mathcal{L}\phi^{n+1},\phi^{n+1})+(F(\phi^{n+1}),1)\right]-\left[\frac12(\mathcal{L}\phi^{n},\phi^{n})+(F(\phi^{n}),1)\right]\leq0.
\end{equation*}
\end{theorem}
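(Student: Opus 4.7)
The plan is to split the analysis according to the branch of (\ref{im-LM-e5}) selected at step $n$.

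If $E(\overline{\phi}^{n+1})\le E(\phi^n)$, then by (\ref{im-LM-e5}) one has $\eta^{n+1/2}=0$, so the update (\ref{im-LM-e8}) reduces to $\phi^{n+1}=\overline{\phi}^{n+1}$ and the required inequality $E^{n+1}\le E^n$ is literally the hypothesis that triggered this branch.

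The substantive case is $E(\overline{\phi}^{n+1})>E(\phi^n)$. Here $\eta^{n+1/2}=\overline{\eta}^{n+1/2}$ is the root of (\ref{im-LM-e6}), and the key point I would establish is that the triple $(\phi^{n+1},\mu^{n+1/2},\eta^{n+1/2})$ produced by Steps I--III is then a genuine solution of the classical Lagrange multiplier scheme (\ref{im-LM-e1}). Indeed, the decomposition (\ref{im-LM-e2}) together with the constructions (\ref{im-LM-e3}) of $\overline{\phi}^{n+1}$ and (\ref{im-LM-e7}) of $q^{n+1}$ shows by linearity that $\phi^{n+1}=\overline{\phi}^{n+1}+\eta^{n+1/2}q^{n+1}$ satisfies the first two equations of (\ref{im-LM-e1}) with the same $\mu^{n+1/2}$, while (\ref{im-LM-e6}) is literally the third equation of (\ref{im-LM-e1}) after the substitution $\phi^{n+1}=\overline{\phi}^{n+1}+\overline{\eta}^{n+1/2}q^{n+1}$.

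Once this equivalence is in hand I simply reuse the discrete energy identity derived immediately after (\ref{im-LM-e1}): test its first equation with $\Delta t\,\mu^{n+1/2}$ and its second with $-(\phi^{n+1}-\phi^n)$, add the third equation, and use the symmetry and positivity of $\mathcal{L}$ to collapse $\tfrac12(\mathcal{L}(\phi^{n+1}+\phi^{n}),\phi^{n+1}-\phi^{n})$ into $\tfrac12(\mathcal{L}\phi^{n+1},\phi^{n+1})-\tfrac12(\mathcal{L}\phi^{n},\phi^{n})$. This yields
\[
E^{n+1}-E^{n}=-\Delta t(\mathcal{G}\mu^{n+1/2},\mu^{n+1/2})\le 0,
\]
and combining the two cases completes the proof.

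The only delicate point, and the one I would scrutinize most carefully, is the bookkeeping in the reduction to (\ref{im-LM-e1}): one has to check that the coefficients in the superposition $\phi^{n+1}=\overline{\phi}^{n+1}+\eta^{n+1/2}q^{n+1}$ line up exactly so as to reproduce the multiplier-dependent term $\eta^{n+1/2}F'(\phi^{*,n+1/2})$ in the second equation of (\ref{im-LM-e1}) and the $[1+\eta^{n+1/2}]$ prefactor in its third equation. Solvability of the nonlinear scalar equation (\ref{im-LM-e6}) is a separate well-posedness matter and does not enter the energy estimate itself; once a real root $\overline{\eta}^{n+1/2}$ is secured, the estimate above goes through verbatim.
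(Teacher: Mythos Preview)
Your proof is correct and follows essentially the same strategy as the paper: split on the branch of \eqref{im-LM-e5}, and in the nontrivial branch show that the output $(\phi^{n+1},\eta^{n+1/2})$ is a solution of the coupled scheme \eqref{im-LM-e1}, whence the energy identity follows. The paper carries out this reduction more explicitly by substituting $\overline{\phi}^{n+1}=\phi^{n+1}-\overline{\eta}^{n+1/2}q^{n+1}$ into \eqref{im-LM-e4}, introducing an auxiliary potential $\mu_1^{n+1/2}=\tfrac12\mathcal{L}q^{n+1}+F'(\phi^{*,n+1/2})$, and deriving the combined system with effective potential $\mu^{n+1/2}+\overline{\eta}^{n+1/2}\mu_1^{n+1/2}$ before testing; you instead invoke the linearity already encoded in \eqref{im-LM-e2}--\eqref{im-LM-e3} and cite the identity proved right after \eqref{im-LM-e1}. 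One small caution: the symbol $\mu^{n+1/2}$ is overloaded --- the one appearing in \eqref{im-LM-e4} differs from the one in \eqref{im-LM-e1} once $\eta^{n+1/2}\neq 0$ --- so your phrase ``with the same $\mu^{n+1/2}$'' and your final displayed identity should be read with the \eqref{im-LM-e1} potential, not the Step~I one.
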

\begin{proof}
From equation \eqref{im-LM-e8}, one can see that $\phi^{n+1}=\overline{\phi}^{n+1}+\eta^{n+\frac12}q^{n+1}$. If $\eta^{n+\frac12}=0$, we have $\phi^{n+1}=\overline{\phi}^{n+1}$ and $E(\overline{\phi}^{n+1})\leq E(\phi^n)$. It implies that 
\begin{equation*}
\displaystyle E(\phi^{n+1})=E(\overline{\phi}^{n+1})\leq E(\phi^n).
\end{equation*}

If $\eta^{n+\frac12}\neq0$, we have $\eta^{n+\frac12}=\overline{\eta}^{n+\frac12}$ and $\phi^{n+1}=\overline{\phi}^{n+1}+\overline{\eta}^{n+\frac12}q^{n+1}$. Then the following equation will hold
\begin{equation}\label{im-LM-e9}
\aligned
&\left(F(\phi^{n+1})-F(\phi^{n}),1\right)=
\left[1+\overline{\eta}^{n+\frac12}\right]\left({F'}({\phi}^{*,n+\frac12}),\phi^{n+1}-\phi^n\right),
\endaligned
\end{equation}  

Bringing equation $\overline{\phi}^{n+1}=\phi^{n+1}-\overline{\eta}^{n+\frac12}q^{n+1}$ into equation \eqref{im-LM-e4}, we obtain
\begin{equation}\label{im-LM-e10}
   \begin{array}{l}
\displaystyle\phi^{n+1}-\phi^n=-\Delta t\mathcal{G}\mu^{n+\frac12}+\overline{\eta}^{n+\frac12}q^{n+1},\\
\displaystyle\mu^{n+\frac12}=\frac12\mathcal{L}\phi^{n+1}+\frac12\mathcal{L}\phi^{n}+F'({\phi}^{*,n+\frac12})-\overline{\eta}^{n+\frac12}\frac12\mathcal{L}q^{n+1}
   \end{array}
  \end{equation}

Define $\mu_1^{n+\frac12}=\frac12\mathcal{L}q^{n+1}+F'({\phi}^{*,n+\frac12})$ and combine it with equation \eqref{im-LM-e7}, we have 
\begin{equation}\label{im-LM-e11}
\aligned
q^{n+1}=-\Delta t\mathcal{G}\mu_1^{n+\frac12}
\endaligned
\end{equation}  

Substituting the equation \eqref{im-LM-e11} into \eqref{im-LM-e10}, we immediately obtain
\begin{equation}\label{im-LM-e12}
   \begin{array}{l}
\displaystyle\phi^{n+1}-\phi^n=-\Delta t\mathcal{G}(\mu^{n+\frac12}+\overline{\eta}^{n+\frac12}\mu_1^{n+\frac12}),\\
\displaystyle\mu^{n+\frac12}+\overline{\eta}^{n+\frac12}\mu_1^{n+\frac12}=\frac12\mathcal{L}\phi^{n+1}+\frac12\mathcal{L}\phi^{n}+F'({\phi}^{*,n+\frac12})+\overline{\eta}^{n+\frac12}F'({\phi}^{*,n+\frac12}).
   \end{array}
  \end{equation}

Taking the inner products of the equations in the above \eqref{im-LM-e12} with $\mu^{n+\frac12}+\overline{\eta}^{n+\frac12}\mu_1^{n+\frac12}$ and $-(\phi^{n+1}-\phi^n)$,
summing up the results together with the equation \eqref{im-LM-e9}, we can also obtain immediately the following original energy dissipation law:
\begin{equation*}
\aligned
&\displaystyle\left[\frac12(\mathcal{L}\phi^{n+1},\phi^{n+1})+(F(\phi^{n+1}),1)\right]-\left[\frac12(\mathcal{L}\phi^{n},\phi^{n})+(F(\phi^{n}),1)\right]\\
&=-\Delta t\left(\mu^{n+\frac12}+\overline{\eta}^{n+\frac12}\mu_1^{n+\frac12},\mathcal{G}(\mu^{n+\frac12}+\overline{\eta}^{n+\frac12}\mu_1^{n+\frac12})\right),
\endaligned
\end{equation*}
which completes the proof.
\end{proof}
\begin{remark}\label{im-LM-remark1} 
Compared with the classic Lagrange multiplier schemes \eqref{nLM-e1} and \eqref{im-LM-e1}, one can find that the new proposed schemes \eqref{im-LM-e4}-\eqref{im-LM-e7} only need to solve a nonlinear algebraic equation about $\eta^{n+\frac12}$ in very few time layers. Meanwhile, the new algorithm only requires solving one linear system with constant coefficients as opposed to the two linear systems by the classic Lagrange multiplier approach which will save much computational costs.
\end{remark}
\subsection{Second-order BDF2 scheme}
In this subsection, we will give a semi-implicit combined Lagrange multiplier scheme based on second-order backward difference formula (BDF2). From \cite{cheng2020new}, one can see that the classic Lagrange multiplier BDF2 scheme does not hold the original energy dissipation law. We next give a modified Lagrange multiplier BDF2 scheme for the equivalent system \eqref{LM-e3} which the original energy is dissipative. 

Firstly, the new Lagrange multiplier BDF2 scheme for the equivalent system \eqref{LM-e3} can be written as follows: 
\begin{equation}\label{im-LM-bdf2-e1}
   \begin{array}{l}
\displaystyle\frac{3\phi^{n+1}-4\phi^n+\phi^{n-1}}{2\Delta t}=-\mathcal{G}\mu^{n+1},\\
\displaystyle\mu^{n+1}=\mathcal{L}\phi^{n+1}+F'(\widehat{\phi}^{n+1})+\eta^{n+1}F'(\widehat{\phi}^{n+1}),\\
\displaystyle(3F(\phi^{n+1}),1)-(4F(\phi^{n})-F(\phi^{n-1}),1)=\displaystyle(1+\eta^{n+1})\left(F'(\widehat{\phi}^{n+1}),3\phi^{n+1}-4\phi^n+\phi^{n-1}\right),
   \end{array}
  \end{equation}
where $\widehat{\phi}^{n+1}=2\phi^n-\phi^{n-1}$.

Similarly as before, we can also set $\phi^{n+1}=\overline{\phi}^{n+1}+\eta^{n+1}q^{n+1}$. Here $\overline{\phi}^{n+1}$ is the solution of the classic semi-implicit BDF2 scheme and its value together with $q^{n+1}$ can be solved directly by $\phi^n$, $\phi^{n-1}$ and $\widehat{\phi}^{n+1}$ as follows:
\begin{equation}\label{im-LM-bdf2-e2}
\aligned
\overline{\phi}^{n+1}=A^{-1}\left[(2\phi^{n}-\frac12\phi^{n-1})-\Delta t\mathcal{G}F'(\widehat{\phi}^{n+1})\right],\quad q^{n+1}=-\Delta tA^{-1}\mathcal{G}F'(\widehat{\phi}^{n+1}),
\endaligned
\end{equation}   
where $A=(\frac32I+\Delta t\mathcal{G}\mathcal{L})$.

To keep the original dissipation law, we need 
\begin{equation}\label{im-LM-bdf2-e3}
\displaystyle E^{n+1}=\left[\frac12(\mathcal{L}\phi^{n+1},\phi^{n+1})+(F(\phi^{n+1}),1)\right]\leq\left[\frac12(\mathcal{L}\phi^{n},\phi^{n})+(F(\phi^{n}),1)\right]=E^n.
\end{equation}

Noting that $\phi^{n+1}=\overline{\phi}^{n+1}+\eta^{n+1}q^{n+1}$ and combining it with the third equation in \eqref{im-LM-bdf2-e1}, we obtain
\begin{equation*}
\aligned
\displaystyle E^{n+1}
&=\frac12(\mathcal{L}\phi^{n+1},\phi^{n+1})+(F(\phi^{n+1}),1)\\
&=\frac12(\mathcal{L}q^{n+1},q^{n+1})(\eta^{n+1})^2+(\mathcal{L}\overline{\phi}^{n+1},q^{n+1})\eta^{n+1}+\frac12(\mathcal{L}\overline{\phi}^{n+1},\overline{\phi}^{n+1})\\
&\quad+\left(F'(\widehat{\phi}^{n+1}),q^{n+1}\right)(\eta^{n+1})^2+\frac13\left(F'(\widehat{\phi}^{n+1}),3\overline{\phi}^{n+1}+3q^{n+1}-4\phi^n+\phi^{n-1}\right)\eta^{n+1}\\
&\quad+\frac13(4F(\phi^{n})-F(\phi^{n-1}),1)+\frac13\left(F'(\widehat{\phi}^{n+1}),3\overline{\phi}^{n+1}-4\phi^n+\phi^{n-1}\right)\\
&=a(\eta^{n+1})^2+b\eta^{n+1}+c,
\endaligned
\end{equation*}
where the coefficients $a$, $b$ and $c$ satisfy:
\begin{equation*}
\aligned
&a=\frac12(\mathcal{L}q^{n+1},q^{n+1})+\left({F'}(\widehat{\phi}^{n+1}),q^{n+1}\right),\\
&b=(\mathcal{L}\overline{\phi}^{n+1},q^{n+1})+\frac13\left({F'}(\widehat{\phi}^{n+1}),3\overline{\phi}^{n+1}+3q^{n+1}-4\phi^n+\phi^{n-1}\right),\\
&c=\frac12(\mathcal{L}\overline{\phi}^{n+1},\overline{\phi}^{n+1})+\frac13(4F(\phi^{n})-F(\phi^{n-1}),1)+\frac13\left({F'}(\widehat{\phi}^{n+1}),3\overline{\phi}^{n+1}-4\phi^n+\phi^{n-1}\right).
\endaligned
\end{equation*}

Then, the original dissipation law \eqref{im-LM-bdf2-e3}  will be transformed into 
\begin{equation*}
\aligned
\displaystyle E^{n+1}=a(\eta^{n+1})^2+b\eta^{n+1}+c\leq E^n.
\endaligned
\end{equation*}
\begin{remark}\label{im-LM-bdf2-lemma1}
(Optimal choice for $\eta^{n+1}$). Here we explain the optimal choice for $\eta^{n+1}$. $\eta^{n+1}$ can be chosen as a solution of the following optimization problem,
\begin{equation}\label{im-LM-bdf2-e4}
\aligned
\eta^{n+1}=\min|Q(\eta)|~s.t.~ a\eta^2+b\eta+c\leq E^n,
\endaligned
\end{equation}
where 
\begin{equation*}
\aligned
\displaystyle Q(\eta)=
&(3F(\overline{\phi}^{n+1}+\eta q^{n+1}),1)-(4F(\phi^{n})-F(\phi^{n-1}),1)\\
&-(1+\eta)\left(F'(\widehat{\phi}^{n+1}),3\overline{\phi}^{n+1}+3\eta q^{n+1}-4\phi^n+\phi^{n-1}\right).
\endaligned
\end{equation*}
\end{remark}

Similarly as the schemes \eqref{im-LM-e4}-\eqref{im-LM-e7}, we can compute $\phi^{n+1}$ by the following modified technique to save the computational costs.

Given $\phi^n$, $\phi^{n-1}$, we compute $\phi^{n+1}$, $\eta^{n+1}$ via the following three steps:

\begin{scheme}

\textbf{Step I}: Calculate the intermediate solution $\overline{\phi}^{n+1}$ from the following classic semi-implicit BDF2 scheme:
\begin{equation}\label{im-LM-bdf2-e5}
   \begin{array}{l}
\displaystyle\frac{3\overline{\phi}^{n+1}-4\phi^n+\phi^{n-1}}{2\Delta t}=-\mathcal{G}\mu^{n+1},\\
\displaystyle\mu^{n+1}=\mathcal{L}\overline{\phi}^{n+1}+F'(\widehat{\phi}^{n+1})
   \end{array}
  \end{equation}
where $\widehat{\phi}^{n+1}=2\phi^n-\phi^{n-1}$.

\textbf{Step II}: compute the Lagrange multiplier variable $\eta^{n+1}$ as follows:
 \begin{equation}\label{im-LM-bdf2-e6}
\eta^{n+1}=\left\{
   \begin{array}{ll}
0,\quad E(\overline{\phi}^{n+1})\leq E(\phi^n),\\
\overline{\eta}^{n+1},\quad E(\overline{\phi}^{n+1})>E(\phi^n).
   \end{array}
   \right.
\end{equation} 
where $\overline{\eta}^{n+1}$ can be obtained as follows:
\begin{equation*}
\aligned
\overline{\eta}^{n+1}=\min|Q(\eta)|~s.t.~ a\eta^2+b\eta+c\leq E^n,
\endaligned
\end{equation*}
where 
\begin{equation*}
\aligned
\displaystyle Q(\eta)=
&(3F(\overline{\phi}^{n+1}+\eta q^{n+1}),1)-(4F(\phi^{n})-F(\phi^{n-1}),1)\\
&-(1+\eta)\left(F'(\widehat{\phi}^{n+1}),3\overline{\phi}^{n+1}+3\eta q^{n+1}-4\phi^n+\phi^{n-1}\right).
\endaligned
\end{equation*}

\textbf{Step III}: Update $\phi^{n+1}$ as
\begin{equation}\label{im-LM-bdf2-e7}
\phi^{n+1}=\overline{\phi}^{n+1}+\eta^{n+1}q^{n+1}.
\end{equation}  
\end{scheme}

One can easy to obtain that the schemes \eqref{im-LM-bdf2-e5}-\eqref{im-LM-bdf2-e7} holds the following original energy stability:
\begin{theorem}\label{im-LM-bdf2-theorem1}
The second-order BDF2 scheme based on the semi-implicit combined Lagrange multiplier approach \eqref{im-LM-bdf2-e5}-\eqref{im-LM-bdf2-e7} is unconditionally energy stable in the sense that
\begin{equation*}
E^{n+1}=\left[\frac12(\mathcal{L}\phi^{n+1},\phi^{n+1})+(F(\phi^{n+1}),1)\right]\leq\left[\frac12(\mathcal{L}\phi^{n},\phi^{n})+(F(\phi^{n}),1)\right]=E^n.
\end{equation*}
\end{theorem}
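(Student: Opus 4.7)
The plan is to split into the two branches of the definition \eqref{im-LM-bdf2-e6} of $\eta^{n+1}$. In the first branch, $\eta^{n+1}=0$: the update \eqref{im-LM-bdf2-e7} collapses to $\phi^{n+1}=\overline{\phi}^{n+1}$, and the branch hypothesis is precisely $E(\overline{\phi}^{n+1})\leq E(\phi^n)$, so $E^{n+1}=E(\overline{\phi}^{n+1})\leq E^n$ with no further work. This handles the easy half.

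In the second branch, $\eta^{n+1}=\overline{\eta}^{n+1}$, my strategy is to reuse verbatim the algebraic expansion already carried out in the paragraph preceding Remark \ref{im-LM-bdf2-lemma1}. Substituting the update \eqref{im-LM-bdf2-e7} into the quadratic part $\tfrac12(\mathcal{L}\phi^{n+1},\phi^{n+1})$ and rewriting the nonlinear part $(F(\phi^{n+1}),1)$ via the identity $Q(\overline{\eta}^{n+1})=0$ (which is exactly the third equation of the underlying scheme \eqref{im-LM-bdf2-e1} reformulated using $\phi^{n+1}=\overline{\phi}^{n+1}+\overline{\eta}^{n+1}q^{n+1}$), I recover the quadratic expression $E^{n+1}=a(\overline{\eta}^{n+1})^2+b\overline{\eta}^{n+1}+c$ with $a,b,c$ exactly as defined before the theorem. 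The inequality constraint in the optimization problem \eqref{im-LM-bdf2-e4} then yields $E^{n+1}\leq E^n$ immediately; in particular, $\overline{\phi}^{n+1}$ and $q^{n+1}$ are already known from \eqref{im-LM-bdf2-e5} and \eqref{im-LM-bdf2-e2}, so all three coefficients $a,b,c$ are fixed data and the constraint is a concrete scalar inequality on $\overline{\eta}^{n+1}$.

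The main obstacle I anticipate is justifying the feasibility of the constrained optimization \eqref{im-LM-bdf2-e4} together with the assertion that $Q(\overline{\eta}^{n+1})=0$ at the optimum, since the quadratic-form identity $E^{n+1}=a\eta^2+b\eta+c$ relies on $Q(\eta)=0$. One would want to show that the feasible set $\{\eta:\, a\eta^2+b\eta+c\leq E^n\}$ always contains at least one root of $Q$. A natural route is a continuity/intermediate-value argument: $\eta=0$ is always feasible in the constraint whenever it also happens to be energetically admissible, and away from that case the zero-crossings of $Q(\eta)$, together with the fact that $a\eta^2+b\eta+c$ is just a scalar quadratic, can be matched by shrinking $\Delta t$ or appealing to the smoothness of $F$. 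Once this feasibility point is granted, the two-branch energy inequality combines to give the claimed unconditional energy dissipation, completing the argument.
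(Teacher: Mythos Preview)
Your two-branch structure is exactly the paper's proof. In the first branch the paper argues identically; in the second branch the paper is even terser than you: it simply invokes Remark~\ref{im-LM-bdf2-lemma1} and writes $E^{n+1}=a(\overline{\eta}^{n+1})^2+b\overline{\eta}^{n+1}+c\leq E^n$ directly from the optimization constraint, without any discussion of whether $Q(\overline{\eta}^{n+1})=0$.

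The gap you flag is genuine, and the paper does not close it. The identity $E(\overline{\phi}^{n+1}+\eta q^{n+1})=a\eta^2+b\eta+c$ was derived in the paragraph preceding Remark~\ref{im-LM-bdf2-lemma1} by substituting the third equation of \eqref{im-LM-bdf2-e1}, i.e.\ by using $Q(\eta)=0$; without that, $(F(\phi^{n+1}),1)$ cannot be replaced by the affine-in-$\eta$ expression that produces the quadratic. The optimization \eqref{im-LM-bdf2-e4} only minimizes $|Q(\eta)|$ over the feasible set $\{a\eta^2+b\eta+c\leq E^n\}$ and does not assert that the minimum value is zero, so the step ``$E^{n+1}=a(\overline{\eta}^{n+1})^2+b\overline{\eta}^{n+1}+c$'' is unjustified in both your proposal and the paper's proof. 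Your suggested patch (show a root of $Q$ lies in the feasible set) is the right direction but is not carried out in the paper; the paper simply asserts the conclusion. In short, your proposal is at least as complete as the paper's own argument, and you have correctly isolated the missing step.
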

\begin{proof}
From \textbf{Step III} \eqref{im-LM-bdf2-e7}, one knows that $\phi^{n+1}=\overline{\phi}^{n+1}+\eta^{n+1}q^{n+1}$. Noting that the choice of $\eta^{n+1}$ in \eqref{im-LM-bdf2-e6}, we obtain that if $\eta^{n+1}=0$, we have $\phi^{n+1}=\overline{\phi}^{n+1}$ and $E(\overline{\phi}^{n+1})\leq E(\phi^n)$. It implies that 
\begin{equation*}
\displaystyle E^{n+1}=E(\phi^{n+1})=E(\overline{\phi}^{n+1})\leq E(\phi^n)=E^n.
\end{equation*}

If $\eta^{n+1}\neq0$, we have $\eta^{n+1}=\overline{\eta}^{n+1}$ and $\phi^{n+1}=\overline{\phi}^{n+1}+\overline{\eta}^{n+1}q^{n+1}$. $\overline{\eta}^{n+1}$ can be obtained from Remark \ref{im-LM-bdf2-lemma1}. It means that $E^{n+1}=a(\overline{\eta}^{n+1})^2+b\overline{\eta}^{n+1}+c\leq E^n$. 
which completes the proof.
\end{proof}
\begin{remark}\label{im-LM-bdf2-lemma2}
The new Lagrange multiplier approach based on BDF2 in \cite{cheng2020new} only preserves modified energy rather than the original energy. However, our proposed scheme based on BDF2 can maintain the original energy law. 
\end{remark}
\section{The high-order BDF$k$ scheme}
In this section, we will consider a high-order BDF$k$ scheme based on the proposed semi-implicit combined Lagrange multiplier method. Inspired by the generalized scalar auxiliary variable method \cite{huang2020highly}, we can obtain the following BDF$k$ Lagrange multiplier scheme:
\begin{equation}\label{im-LM-bdfk-e1}
   \begin{array}{l}
\displaystyle\frac{\alpha_k\overline{\phi}^{n+1}-\mathcal{A}_k(\phi^n)}{\Delta t}=-\mathcal{G}\mu^{n+1},\\
\displaystyle\mu^{n+1}=\mathcal{L}\overline{\phi}^{n+1}+F'(\widehat{\phi}^{n+1}),\\
\phi^{n+1}=\left[1-(1-\eta^{n+1})^{k+1}\right]\overline{\phi}^{n+1},\\
\displaystyle \frac{E(\phi^{n+1})-E(\phi^{n})}{\Delta t}=-(\eta^{n+1})^2\left(\mathcal{G}\mu^{n+1},\mu^{n+1}\right),
   \end{array}
  \end{equation}
where $\widehat{\phi}^{n+1}$, $\alpha_k$, $\mathcal{A}_k$ can be derived by Taylor expansion. For the readers’ convenience, we provide them for $k=1,2,3,4$ below:

BDF1:
\begin{equation}\label{im-LM-bdfk-e2}
\alpha_1=1,\quad\mathcal{A}_1(\phi^{n})=\phi^n,\quad\widehat{\phi}^{n+1}=\phi^n.
\end{equation}
BDF2:
\begin{equation}\label{im-LM-bdfk-e3}
\alpha_2=\frac{3}{2},\quad\mathcal{A}_2(\phi^{n})=2\phi^n-\frac{1}{2}\phi^{n-1},\quad\widehat{\phi}^{n+1}=2\phi^n-\phi^{n-1}.
\end{equation}
BDF3:
\begin{equation}\label{im-LM-bdfk-e4}
\alpha_3=\frac{11}{6},\quad\mathcal{A}_3(\phi^{n})=3\phi^n-\frac{3}{2}\phi^{n-1}+\frac13\phi^{n-2},\quad\widehat{\phi}^{n+1}=3\phi^n-3\phi^{n-1}+\phi^{n-2}.
\end{equation}
BDF4:
\begin{equation}\label{im-LM-bdfk-e5}
\aligned
&\alpha_4=\frac{25}{12},\quad\mathcal{A}_4(\phi^{n})=4\phi^n-3\phi^{n-1}+\frac43\phi^{n-2}-\frac14\phi^{n-3},\quad\widehat{\phi}^{n+1}=4\phi^n-6\phi^{n-1}+4\phi^{n-2}-\phi^{n-3}.
\endaligned
\end{equation}

One can easy to obtain the original dissipation law from the last equation in \eqref{im-LM-bdfk-e1}. However, using Newton iteration maybe be ineffective for some complex energy $E$.

To improve the efficiency of the above scheme \eqref{im-LM-bdfk-e1}, we can use similar technique to modify it as follows

\begin{scheme}

\textbf{Step I}: Calculate the intermediate solution $\overline{\phi}^{n+1}$ from the following classic BDF$k$ scheme:
\begin{equation}\label{im-LM-bdfk-e6}
   \begin{array}{l}
\displaystyle\frac{\alpha_k\overline{\phi}^{n+1}-\mathcal{A}_k(\phi^n)}{\Delta t}=-\mathcal{G}\mu^{n+1},\\
\displaystyle\mu^{n+1}=\mathcal{L}\overline{\phi}^{n+1}+F'(\widehat{\phi}^{n+1}).
   \end{array}
  \end{equation}

\textbf{Step II}: compute the Lagrange multiplier variable $\eta^{n+1}$ as follows:
 \begin{equation}\label{im-LM-bdfk-e7}
\eta^{n+1}=\left\{
   \begin{array}{ll}
1,\quad E(\overline{\phi}^{n+1})\leq E(\phi^n),\\
\overline{\eta}^{n+1},\quad E(\overline{\phi}^{n+1})>E(\phi^n).
   \end{array}
   \right.
\end{equation} 
where $\overline{\eta}^{n+1}$ can be obtained as follows: 
\begin{equation*}
\aligned
E(\left[1-(1-\overline{\eta}^{n+1})^{k+1}\right]\overline{\phi}^{n+1})-E(\phi^{n})=-(\overline{\eta}^{n+1})^2\left(\mathcal{G}\mu^{n+1},\mu^{n+1}\right).
\endaligned
\end{equation*}

\textbf{Step III}: Update $\phi^{n+1}$ as
\begin{equation}\label{im-LM-bdfk-e8}
\phi^{n+1}=\left[1-(1-\eta^{n+1})^{k+1}\right]\overline{\phi}^{n+1}.
\end{equation}  
\end{scheme}

\begin{theorem}\label{im-LM-bdfk-theorem1}
The high-order BDF$k$ scheme based on the semi-implicit combined Lagrange multiplier approach \eqref{im-LM-bdfk-e6}-\eqref{im-LM-bdfk-e8} is unconditionally energy stable in the sense that
\begin{equation*}
E(\phi^{n+1})\leq E(\phi^n).
\end{equation*}
\end{theorem}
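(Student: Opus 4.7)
The plan is to argue by cases according to the piecewise definition of $\eta^{n+1}$ in \eqref{im-LM-bdfk-e7}, closely mirroring the proof of Theorem \ref{im-LM-bdf2-theorem1}. The crucial algebraic feature underlying the whole argument is the identity $1-(1-1)^{k+1}=1$, which ensures that the correction factor in the update formula \eqref{im-LM-bdfk-e8} collapses to the identity when $\eta^{n+1}=1$. This is precisely why the two branches of \eqref{im-LM-bdfk-e7} can be glued together consistently.

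First, suppose $E(\overline{\phi}^{n+1})\leq E(\phi^n)$. Then $\eta^{n+1}=1$ by definition, and \eqref{im-LM-bdfk-e8} reduces to $\phi^{n+1}=[1-(1-1)^{k+1}]\overline{\phi}^{n+1}=\overline{\phi}^{n+1}$, so that $E(\phi^{n+1})=E(\overline{\phi}^{n+1})\leq E(\phi^n)$ trivially. Second, suppose $E(\overline{\phi}^{n+1})>E(\phi^n)$, in which case $\eta^{n+1}=\overline{\eta}^{n+1}$. Substituting the update \eqref{im-LM-bdfk-e8} directly into the defining equation of $\overline{\eta}^{n+1}$ in Step II gives
\begin{equation*}
E(\phi^{n+1})-E(\phi^n)=-(\overline{\eta}^{n+1})^2\bigl(\mathcal{G}\mu^{n+1},\mu^{n+1}\bigr),
\end{equation*}
and the right-hand side is non-positive because $\mathcal{G}$ is positive definite. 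Combining the two cases yields $E(\phi^{n+1})\leq E(\phi^n)$.

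The main obstacle is actually orthogonal to the algebraic manipulation just described: one must justify that a real solution $\overline{\eta}^{n+1}$ of the nonlinear algebraic equation in Step II really exists when $E(\overline{\phi}^{n+1})>E(\phi^n)$. Evaluating both sides as functions of $\eta$, at $\eta=1$ the left-hand side equals the strictly positive quantity $E(\overline{\phi}^{n+1})-E(\phi^n)$ while the right-hand side equals the non-positive quantity $-(\mathcal{G}\mu^{n+1},\mu^{n+1})$; at $\eta=0$ the left-hand side reduces to $E(0)-E(\phi^n)$ while the right-hand side vanishes. Provided $E(0)\leq E(\phi^n)$ (a mild condition that typically holds in phase-field applications where $\phi=0$ is a stable or neutral state), an intermediate-value argument produces a root in $[0,1]$. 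Granted this existence assumption, which the scheme takes implicitly, the energy-decrease identity displayed above completes the proof.
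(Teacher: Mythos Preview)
Your proof is correct and follows essentially the same two-case argument as the paper: when $\eta^{n+1}=1$ the update collapses to $\phi^{n+1}=\overline{\phi}^{n+1}$, and when $\eta^{n+1}=\overline{\eta}^{n+1}$ the defining equation in Step~II directly yields $E(\phi^{n+1})-E(\phi^n)=-(\eta^{n+1})^2(\mathcal{G}\mu^{n+1},\mu^{n+1})\le 0$. Your additional paragraph on the existence of $\overline{\eta}^{n+1}$ via an intermediate-value argument goes beyond the paper's proof, which simply takes the solvability of the nonlinear equation in Step~II for granted.
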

\begin{proof}
Firstly, from the choice of $\eta^{n+1}$ in \textbf{Step II} of above BDF$k$ scheme, we have $\eta^{n+1}=1$ under the condition of $E(\overline{\phi}^{n+1})\leq E(\phi^n)$. Combining it with \textbf{Step III}, we have 
\begin{equation}\label{im-LM-bdfk-e9}
\phi^{n+1}=\left[1-(1-\eta^{n+1})^{k+1}\right]\overline{\phi}^{n+1}=\overline{\phi}^{n+1},
\end{equation}  
which means that 
\begin{equation*}
E(\phi^{n+1})=E(\overline{\phi}^{n+1})\leq E(\phi^n).
\end{equation*}

Secondly, if $E(\overline{\phi}^{n+1})>E(\phi^n)$, we have $\eta^{n+1}=\overline{\eta}^{n+1}$ and 
\begin{equation*}
\aligned
E(\phi^{n+1})-E(\phi^{n})=-(\eta^{n+1})^2\left(\mathcal{G}\mu^{n+1},\mu^{n+1}\right)\leq0.
\endaligned
\end{equation*}
\end{proof}

\section{Extension to the general dissipative systems}
In this section, we try the proposed semi-implicit combined Lagrange multiplier approach to solve the general dissipative system to construct numerical schemes with original dissipation law. Consider a domain $\Omega$ in two or three dimensions and a dissipative system on this domain, whose dynamics is described by \cite{zhang2022generalized}:
\begin{equation}\label{nds-e1}
\aligned
\displaystyle\frac{\partial \textbf{u}}{\partial t}+\mathcal{A}\textbf{u}+g(\textbf{u})=0,
\endaligned
\end{equation}
where $\textbf{u}(x,t)$ denotes the state variables of the system, $\mathcal{A}$ is a positive definite operator and $g(\textbf{u})$ is a semi-linear or quasi-linear operator. The above system satisfies the following energy dissipative law
\begin{equation}\label{nds-e2}
\aligned
\displaystyle\frac{dE(\textbf{u})}{dt}=-\mathcal{K}\textbf{u},
\endaligned
\end{equation}
where $\mathcal{K}\textbf{u}\geq0$ for all $\textbf{u}$.

It is natural and efficient to use the proposed technique to solve this dissipative system. Introduce a zero-factor Lagrange multiplier $\eta(t)$ and rewrite the dissipative system \eqref{nds-e1} with $\eta(t)$ as follows
\begin{equation}\label{nds-e3}
\aligned
&\displaystyle\frac{\partial \textbf{u}}{\partial t}+\mathcal{A}\textbf{u}+g(\textbf{u})+\eta(t)g(\textbf{u})=0,\\
&\displaystyle\frac{dE(\textbf{u})}{dt}=-\left[1+\eta(t)\right]\mathcal{K}\textbf{u}.
\endaligned
\end{equation}

Using similar technique as before, we can compute $\textbf{u}$ and $\eta$ as the following three steps:

\begin{scheme}

\textbf{Step I}: Calculate the intermediate solution $\overline{\textbf{u}}^{n+1}$ from the following classic semi-implicit CN scheme:
\begin{equation}\label{nds-e4}
   \begin{array}{l}
\displaystyle\frac{\overline{\textbf{u}}^{n+1}-\textbf{u}^{n}}{\Delta t}+\frac12\mathcal{A}(\overline{\textbf{u}}^{n+1}+\textbf{u}^{n})+g({\textbf{u}}^{*,n+\frac12})=0,
   \end{array}
  \end{equation}
where ${\textbf{u}}^{*,n+\frac12}=\frac32\textbf{u}^n-\frac12\textbf{u}^{n-1}$.

\textbf{Step II}: Compute the Lagrange multiplier variable $\eta^{n+1/2}$ as follows:
 \begin{equation}\label{nds-e5}
\eta^{n+\frac12}=\left\{
   \begin{array}{ll}
0,\quad E(\overline{\textbf{u}}^{n+1})\leq E(\textbf{u}^n),\\
\overline{\eta}^{n+\frac12},\quad E(\overline{\textbf{u}}^{n+1})>E(\textbf{u}^n).
   \end{array}
   \right.
\end{equation} 
where $\overline{\eta}^{n+\frac12}$ can be obtained as follows:
\begin{equation}\label{nds-e6}
\aligned
&E(\overline{\textbf{u}}^{n+1}+\overline{\eta}^{n+\frac12}\textbf{u}_2^{n+1})-E(\textbf{u}^{n})=
\left[1+\overline{\eta}^{n+\frac12}\right]\mathcal{K}(\overline{\textbf{u}}^{n+1}+\overline{\eta}^{n+\frac12}\textbf{u}_2^{n+1}),
\endaligned
\end{equation}
where $\textbf{u}_2^{n+1}$ can be solved directly by $\phi^n$ and ${\phi}^{*,n+\frac12}$ as follows:
\begin{equation}\label{nds-e7}
\aligned 
\textbf{u}_2^{n+1}+\frac12\Delta t\mathcal{A}\textbf{u}_2^{n+1}+\Delta tg({\textbf{u}}^{*,n+\frac12})=0.
\endaligned
\end{equation}

\textbf{Step III}: Update $\textbf{u}^{n+1}$ as
\begin{equation}\label{nds-e8}
\textbf{u}^{n+1}=\overline{\textbf{u}}^{n+1}+\eta^{n+\frac12}\textbf{u}_2^{n+1}.
\end{equation}  
\end{scheme}

Similarly as before, one can easy to obtain that the second-order scheme \eqref{nds-e5}-\eqref{nds-e8} holds the following dissipation law:
\begin{equation*}
E(\textbf{u}^{n+1})\leq E(\textbf{u}^n).
\end{equation*}

We next take the classic Navier-Stokes equation for example. Consider the following incompressible Navier-Stokes equations in $\Omega\times J$:
\begin{equation}\label{ns-e1}
\aligned
&\displaystyle\frac{\partial\textbf{u}}{\partial t}+\textbf{u}\cdot\nabla\textbf{u}-\nu\Delta\textbf{u}+\nabla p=0,\\
&\nabla\cdot\textbf{u}=0,
\endaligned
\end{equation}
where the domain $\Omega$ is in two or three dimensions with a sufficiently smooth boundary and $J=(0, T]$, $\textbf{u}$ and $p$ are the normalized velocity and pressure, $\nu>0$ denotes the inverse of the Reynolds number. We consider the periodic or homogeneous Dirichlet boundary conditions. The system \eqref{ns-e1} satisfies the following law
\begin{equation*}
	\frac{\mathrm{d}}{\mathrm{d} t} E(\mathbf{u})=-\nu\|\nabla\mathbf{u}\|^{2},
\end{equation*}
where $E(\mathbf{u})=\frac{1}{2}\|\mathbf{u}\|^{2}$ is the total energy. 

Introduce a Lagrange multiplier $\eta(t)$ and rewrite the Navier-Stokes equation \eqref{ns-e1} as the following equivalent system:
\begin{equation}\label{ns-e2}
\aligned
&\frac{\partial\textbf{u}}{\partial t}+\left[1+\eta(t)\right]\textbf{u}\cdot\nabla\textbf{u}-\nu\Delta\textbf{u}+\nabla p=0,\\
&\frac12\frac{\mathrm{d}}{\mathrm{d} t}(\mathbf{u},\mathbf{u})=-\nu\left[1+\eta(t)\right](\nabla\mathbf{u},\nabla\mathbf{u}),\\
&\nabla\cdot\textbf{u}=0.
\endaligned
\end{equation}

Use the semi-implicit first-order backward Euler method for the time discretization, we will have the following scheme 
\begin{equation}\label{ns-e3}
\aligned
&\displaystyle\frac{\textbf{u}^{n+1}-\textbf{u}^{n}}{\Delta t}+\left[1+\eta^{n+1}\right]\textbf{u}^n\cdot\nabla\textbf{u}^n-\nu\Delta\textbf{u}^{n+1}+\nabla p^{n+1}=0,\\
&\frac{(\mathbf{u}^{n+1},\mathbf{u}^{n+1})-(\mathbf{u}^{n},\mathbf{u}^{n})}{2\Delta t}=-\nu\left[1+\eta^{n+1}\right](\nabla\mathbf{u}^{n+1},\nabla\mathbf{u}^{n+1}),\\
&\nabla\cdot\textbf{u}^{n+1}=0.
\endaligned
\end{equation} 

From the second equation in \eqref{ns-e3}, one can obtain the following original dissipation law:
\begin{equation*}
\aligned
E^{n+1}-E^n=\frac{1}{2}\|\textbf{u}^{n+1}\|^2-\frac{1}{2}\|\textbf{u}^{n}\|^2=-\nu\left[1+\eta^{n+1}\right]\|\nabla\textbf{u}^{n+1}\|^2\leq0.
\endaligned
\end{equation*}

We next show how to solve the scheme \eqref{ns-e3} effectively. We can rewrite the first equation in \eqref{ns-e3} equivalently as follows:
\begin{equation}\label{ns-e6}
\aligned
(\frac{1}{\Delta t}I-\nu\Delta)\textbf{u}^{n+1}+\nabla p^{n+1}=\frac{1}{\Delta t}\textbf{u}^{n}-\textbf{u}^n\cdot\nabla\textbf{u}^n-\eta^{n+1}\textbf{u}^n\cdot\nabla\textbf{u}^n.
\endaligned
\end{equation}

Setting 
\begin{equation}\label{ns-e7}
\aligned
\textbf{u}^{n+1}=\textbf{u}_1^{n+1}+\eta^{n+1}\textbf{u}_2^{n+1},\quad p^{n+1}=p_1^{n+1}+\eta^{n+1}p_2^{n+1}.
\endaligned
\end{equation}

One can find that $\textbf{u}_1^{n+1}$ and $p_1^{n+1}$ are solutions of the following equations:
\begin{equation}\label{ns-e8}
\aligned
&(\frac{1}{\Delta t}I-\nu\Delta)\textbf{u}_1^{n+1}+\nabla p_1^{n+1}=\frac{1}{\Delta t}\textbf{u}^{n}-\textbf{u}^n\cdot\nabla\textbf{u}^n,\\
&\nabla\cdot\textbf{u}^{n+1}_1=0,
\endaligned
\end{equation}
and $\textbf{u}_2^{n+1}$ and $p_2^{n+1}$ are solutions of the following equations:
\begin{equation}\label{ns-e9}
\aligned
&(\frac{1}{\Delta t}I-\nu\Delta)\textbf{u}_2^{n+1}+\nabla p_2^{n+1}=-\textbf{u}^n\cdot\nabla\textbf{u}^n,\\
&\nabla\cdot\textbf{u}_2^{n+1}=0.
\endaligned
\end{equation}

Once $\textbf{u}_1^{n+1}$ and $\textbf{u}_2^{n+1}$ are known, we can determine $\eta^{n+1}$ by the following nonlinear algebraic equation:
\begin{equation}\label{ns-e10}
\aligned
&\frac{(\mathbf{u}_1^{n+1}+\eta^{n+1}\mathbf{u}_2^{n+1},\mathbf{u}_1^{n+1}+\eta^{n+1}\mathbf{u}_2^{n+1})-(\mathbf{u}^{n},\mathbf{u}^{n})}{2\Delta t}\\
&=-\nu\left[1+\eta^{n+1}\right](\nabla\mathbf{u}_1^{n+1}+\eta^{n+1}\nabla\mathbf{u}_2^{n+1},\nabla\mathbf{u}_1^{n+1}+\eta^{n+1}\nabla\mathbf{u}_2^{n+1}).
\endaligned
\end{equation}

To save the computational costs and compute $\eta^{n+1}$ effectively, we can also use the similar technique as before.  A highly efficient algorithm to compute $\textbf{u}^{n+1}$, $p^{n+1}$ and $\eta^{n+1}$ is as follows:

\begin{scheme}

\textbf{Step I}: Calculate the intermediate solution $\overline{\textbf{u}}^{n+1}$ and $\overline{p}^{n+1}$ from the following first-order scheme:
\begin{equation}\label{ns-e11}
   \begin{array}{l}
\displaystyle\frac{\overline{\textbf{u}}^{n+1}-\textbf{u}^{n}}{\Delta t}+\textbf{u}^n\cdot\nabla\textbf{u}^n-\nu\Delta\overline{\textbf{u}}^{n+1}+\nabla \overline{p}^{n+1}=0,\\
\nabla\cdot\textbf{u}^{n+1}=0.
   \end{array}
  \end{equation}

\textbf{Step II}: Compute the Lagrange multiplier variable $\eta^{n+1}$ as follows:
 \begin{equation}\label{nds-e12}
\eta^{n+1}=\left\{
   \begin{array}{ll}
0,\quad E(\overline{\textbf{u}}^{n+1})\leq E(\textbf{u}^n),\\
\overline{\eta}^{n+1},\quad E(\overline{\textbf{u}}^{n+1})>E(\textbf{u}^n).
   \end{array}
   \right.
\end{equation} 
where $\overline{\eta}^{n+1}$ can be obtained as follows:
\begin{equation}\label{nds-e13}
\aligned
&\frac{(\overline{\textbf{u}}^{n+1}+\eta^{n+1}\mathbf{u}_2^{n+1},\overline{\textbf{u}}^{n+1}+\eta^{n+1}\mathbf{u}_2^{n+1})-(\mathbf{u}^{n},\mathbf{u}^{n})}{2\Delta t}\\
&=-\nu\left[1+\eta^{n+1}\right](\nabla\overline{\textbf{u}}^{n+1}+\eta^{n+1}\nabla\mathbf{u}_2^{n+1},\nabla\overline{\textbf{u}}^{n+1}+\eta^{n+1}\nabla\mathbf{u}_2^{n+1}).
\endaligned
\end{equation}
where $\textbf{u}_2^{n+1}$ can be solved directly as follows:
\begin{equation}\label{nds-e14}
\aligned 
&(\frac{1}{\Delta t}I-\nu\Delta)\textbf{u}_2^{n+1}+\nabla p_2^{n+1}=-\textbf{u}^n\cdot\nabla\textbf{u}^n,\\
&\nabla\cdot\textbf{u}_2^{n+1}=0.
\endaligned
\end{equation}

\textbf{Step III}: Update $\textbf{u}^{n+1}$ and $p^{n+1}$ as
\begin{equation}\label{nds-e15}
\textbf{u}^{n+1}=\overline{\textbf{u}}^{n+1}+\eta^{n+1}\textbf{u}_2^{n+1},\quad p^{n+1}=\overline{p}^{n+1}+\eta^{n+1}p_2^{n+1}.
\end{equation}  
\end{scheme}
\section{Examples and discussion}
In this section, we provide some numerical experiments to verify our theoretical results of the constructed schemes.
In all of the examples, periodic boundary conditions are considered and a Fourier spectral method is applied.
\subsection{Accuracy and energy stability test}
In this expample, we first verify the accuracy of the proposed numerical schemes for the Allen–Cahn ($\mathcal{G}= I$) and
the Cahn–Hillard equation ($\mathcal{G}=-\Delta$) in the following form:
\begin{equation}\label{Q1}
	\begin{array}{l}
		\displaystyle\frac{\partial \phi}{\partial t}=- M\mathcal{G}\mu+f,\\
		\mu=-\Delta\phi+F'(\phi),
	\end{array}
\end{equation}
Here, $M$ is a mobility constant and  $F'(\phi)=\frac{1}{\epsilon^2}\phi(\phi^2-1)$.
Model parameter values are 
\begin{equation*}
 M=1,\quad \epsilon=1,
\end{equation*}
and solve \eqref{Q1}
with right hand sides $f$ chosen so that the exact solution is
\begin{equation*}
		\phi(x,y,t)=\exp(-t)\cos(\pi x)\cos(\pi y).
\end{equation*}

To discretize the spatial variables, we define the domain as $\Omega= [0, 2\pi)^2$ and use $256\times256$ modes, so the spatial discretization
error is negligible compared to the time discretization.  In Fig. \ref{error1}, we list the $L^\infty$ errors between the numerical solution and the exact solution at $T=1$. It can be observed that all schemes achieve the expected accuracy in time.
\begin{figure}[!h]
	\centering
	\subfigure[]{
		\begin{minipage}[c]{0.4\textwidth}
			\includegraphics[width=1\textwidth]{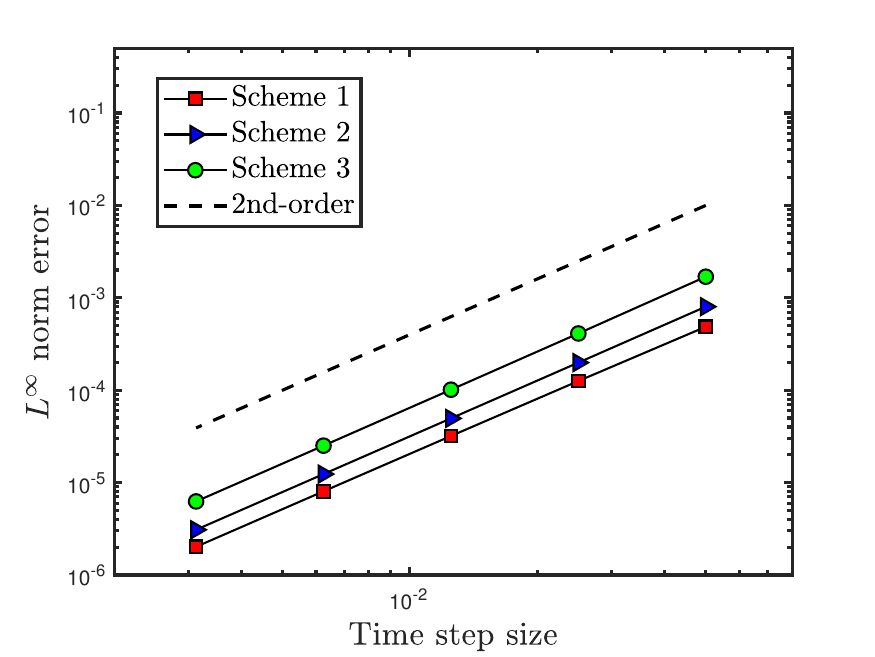}
		\end{minipage}
	}
	\subfigure[]{
		\begin{minipage}[c]{0.4\textwidth}
			\includegraphics[width=1\textwidth]{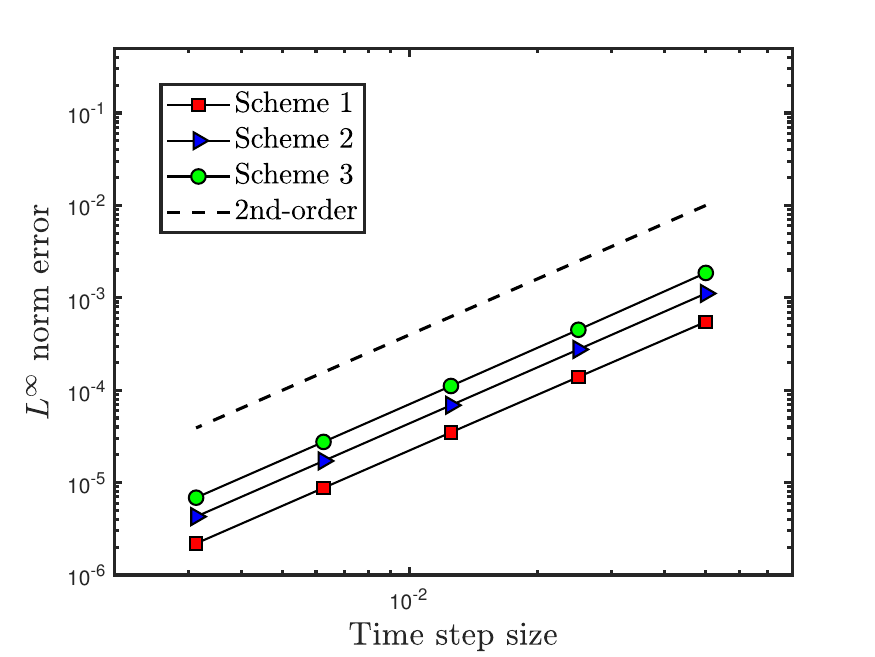}
		\end{minipage}
	}
	{\caption{Numerical convergence rate test: with given exact solution for Allen–Cahn equation (left) and Cahn–Hilliard equation (right).}\label{error1}}
\end{figure}

Next, we investigate the energy stability of the numerical schemes.
Initially, the right-hand side $f$ is set to 0, and we start with a random condition given by:
\begin{equation*}
	\phi(x,y,0)=0.03+0.001\text{rand}(x,y),
\end{equation*}
in which $\text{rand}(x,y)$  represents a uniformly distributed random function  in the domain $[-1, 1]^2$.
In Fig. \ref{AC} and Fig. \ref{CH}, both Allen–Cahn equation and Cahn–Hilliard equation are plotted with a model parameter of  $\epsilon^2= 0.005$.
As observed from the Fig. \ref{AC1}, it can be observed that the computed energy decays over time for both Scheme 1 and Scheme 2. However, the Scheme 2, which is proposed in this paper, exhibits higher computational efficiency  compared to Scheme 1, as depicted in Fig. \ref{AC2}.
As for the Cahn–Hilliard equation shown in Fig. \ref{CH}, Scheme 2 requires a smaller time step compared to Scheme 1 in \cite{cheng2020new},  leading to greater computational expenses.

\begin{figure}[!h]
	\centering
	\subfigure[Energy evolution]{
		\begin{minipage}[c]{0.4\textwidth}
			\includegraphics[width=1\textwidth]{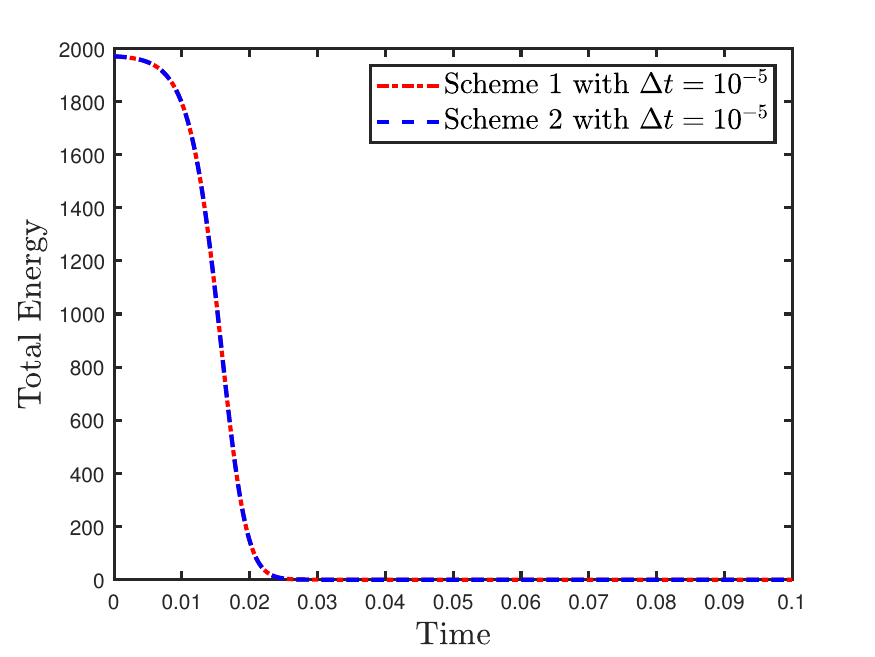}\label{AC1}
		\end{minipage}
	}
	\subfigure[CPU time]{
		\begin{minipage}[c]{0.4\textwidth}
			\includegraphics[width=1\textwidth]{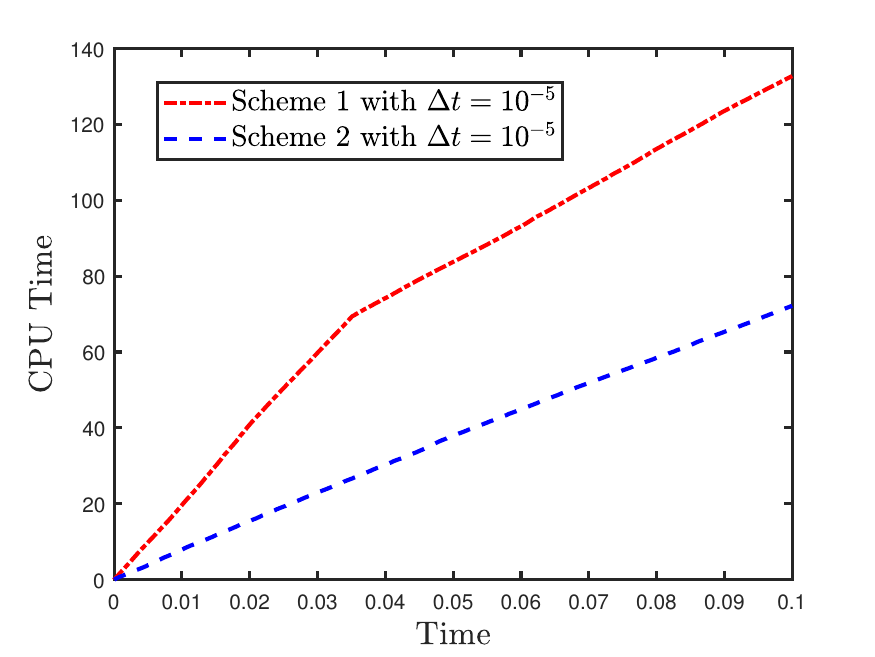}\label{AC2}
		\end{minipage}
	}
	{\caption{Comparison of the Allen-Cahn equation with $M=1$ and $\epsilon^2= 0.005$.}\label{AC}}
\end{figure}

\begin{figure}[!h]
	\centering
	\subfigure[Energy evolution]{
		\begin{minipage}[c]{0.4\textwidth}
			\includegraphics[width=1\textwidth]{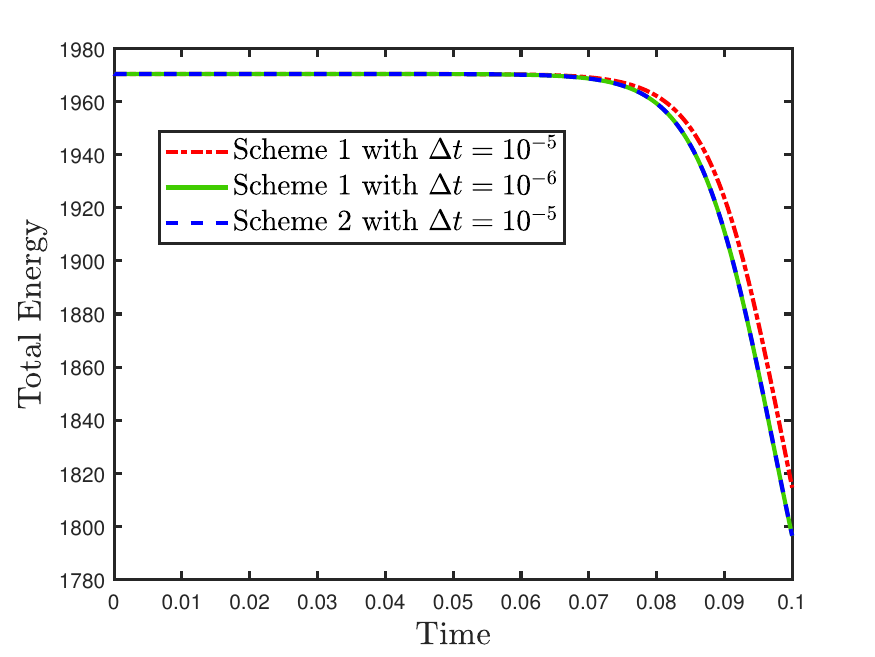}\label{CH1}
		\end{minipage}
	}
	\subfigure[CPU time]{
		\begin{minipage}[c]{0.4\textwidth}
			\includegraphics[width=1\textwidth]{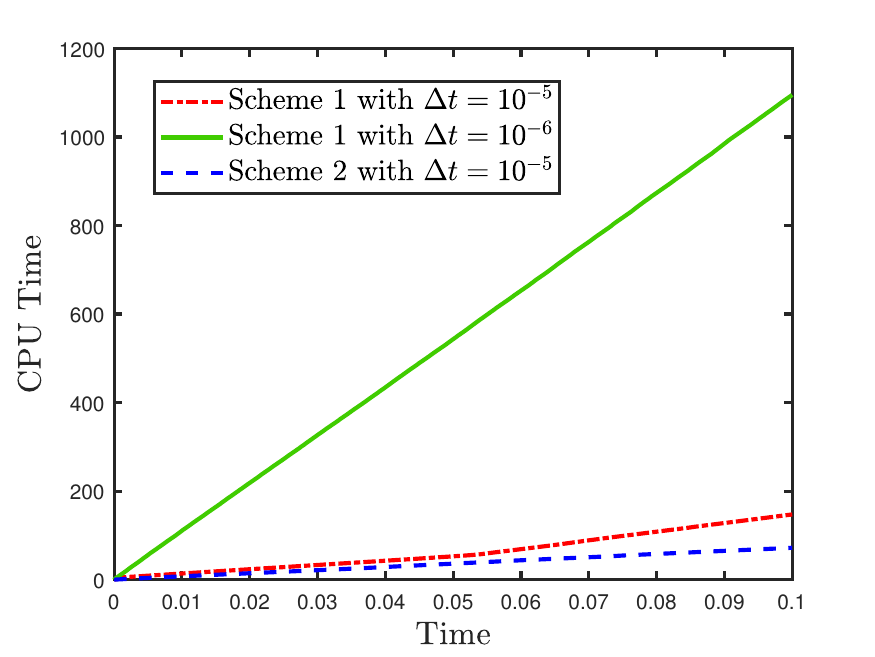}\label{CH2}
		\end{minipage}
	}
	{\caption{Comparison of the Cahn-Hilliard equation with $M=0.01$ and $\epsilon^2= 0.005$.}\label{CH}}
\end{figure}

\subsection{Molecular beam epitaxial without slope selection}
We consider the molecular beam epitaxial (MBE) model without slope selection \cite{schwoebel1966step,villain1991continuum} as an example, where the nonlinear functional is unbounded from below. The standard no-slope MBE model is a fourth-order parabolic equation, known as the $L^2$ gradient flow of the Ehrlich-Schwoebel energy functional, which can be expressed as:
\begin{equation*}
	E(\phi)=\int_\Omega \frac{\epsilon^2}{2}|\Delta \phi|^2+F(\phi)d\textbf{x},
\end{equation*}
where $F(\phi)=-\frac{1}{2}\ln(1+|\nabla\phi|^2)$.
Then the MBE equation can be written as follows
\begin{equation*}
		\displaystyle\frac{\partial \phi}{\partial t}=-M\frac{\delta E(\phi)}{\delta \phi}= -M(\epsilon^2\Delta^2\phi+\nabla\cdot \textbf{f}(\nabla\phi)),\\
\end{equation*}
 The mobility constant is denoted as $M$, and the nonlinear force vector  is defined as $\textbf{f}(\textbf{v}):=\frac{\textbf{v}}{1+|\textbf{v}|^2}$.

Similar to section \ref{s3.1}, we can easily construct unconditionally stable numerical schemes for the MBE equation without slope selection, which can be implemented efficiently.
In Fig. \ref{MBEEeta}, we present the time evolutions of the total energy and the Lagrange multiplier $\eta$ in the domain $[0, 2\pi]^2$.  The parameters used are $M=0.1$ and $\epsilon=0.03$, with the time step of $\Delta t=5.2\times10^{-3}$. The initial condition is given by:
\begin{equation*}
	\phi(x,y,0)=0.01\text{rand}(x,y),
\end{equation*}
where $\text{rand}(x,y)$ represents random data between $[-1, 1]^2$.
Fig.  \ref{MBE} shows the isolines of the numerical solutions of the height function $\phi$ and its Laplacian $\Delta \phi$ for the model without slope selection.
\begin{figure}[!h]
	\centering
	\subfigure[]{
		\begin{minipage}[c]{0.4\textwidth}
			\includegraphics[width=1\textwidth]{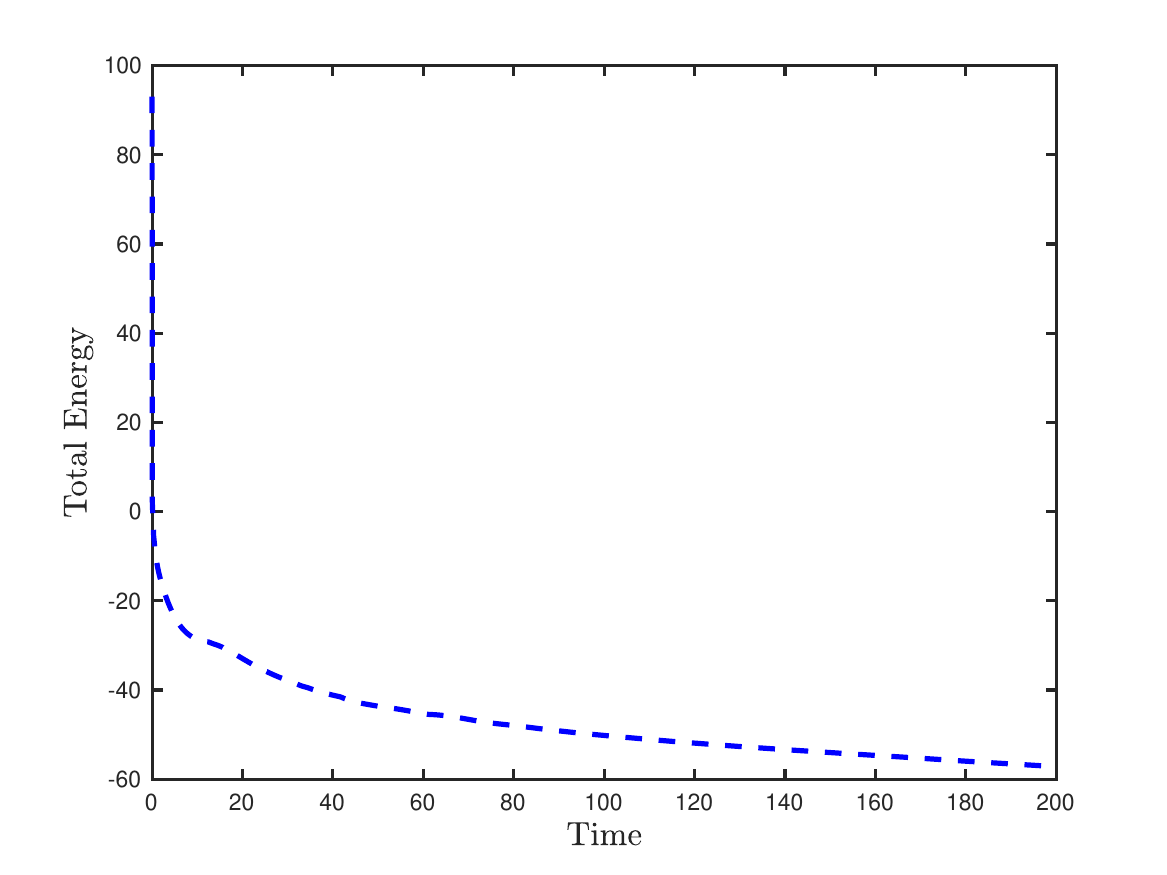}
		\end{minipage}
	}
	\subfigure[]{
		\begin{minipage}[c]{0.4\textwidth}
			\includegraphics[width=1\textwidth]{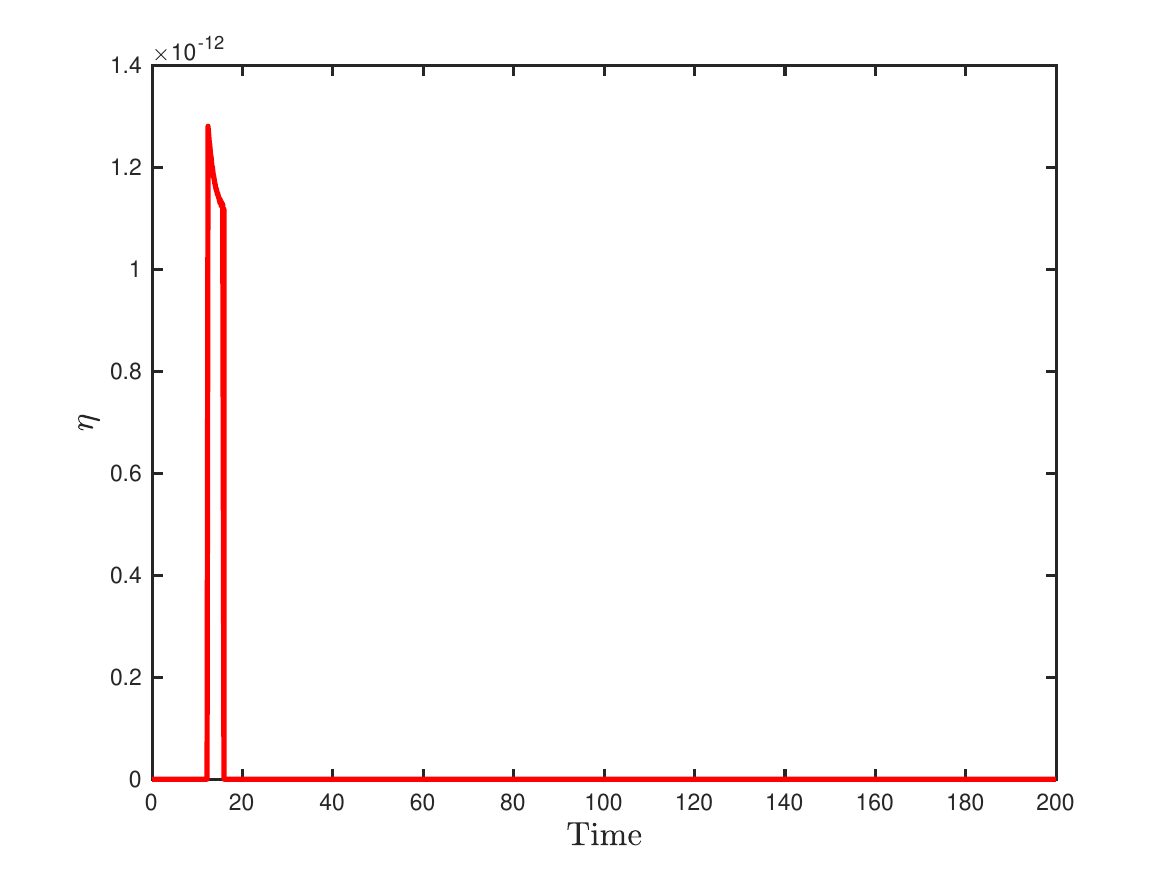}
		\end{minipage}
	}
	{\caption{The temporal  evolution of energy and the Lagrange multiplier $eta$ for the MBE model without slope selection.}\label{MBEEeta}}
\end{figure}

\begin{figure}[H]
	\centering
	\subfigure[T=2]{
		\begin{minipage}[c]{0.4\textwidth}
			\includegraphics[width=1\textwidth]{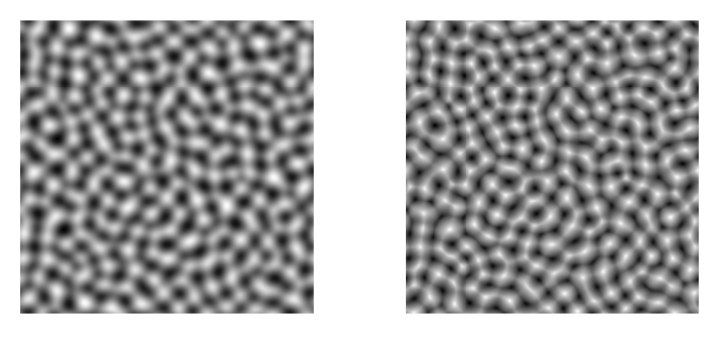}
		\end{minipage}
	}
	\subfigure[T=5]{
		\begin{minipage}[c]{0.4\textwidth}
			\includegraphics[width=1\textwidth]{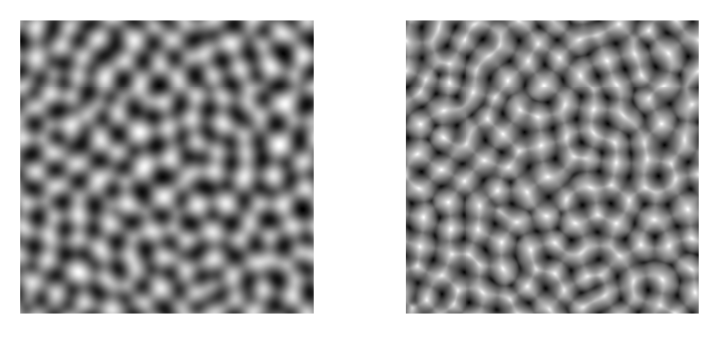}
		\end{minipage}
	}
	\subfigure[T=10]{
	\begin{minipage}[c]{0.4\textwidth}
		\includegraphics[width=1\textwidth]{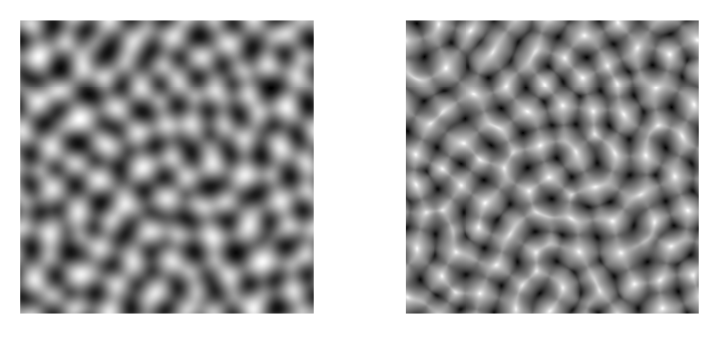}
	\end{minipage}
}
\subfigure[T=30]{
	\begin{minipage}[c]{0.4\textwidth}
		\includegraphics[width=1\textwidth]{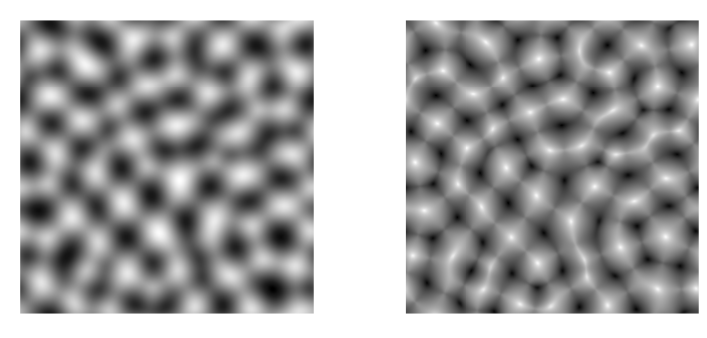}
	\end{minipage}
}
	\subfigure[T=50]{
	\begin{minipage}[c]{0.4\textwidth}
		\includegraphics[width=1\textwidth]{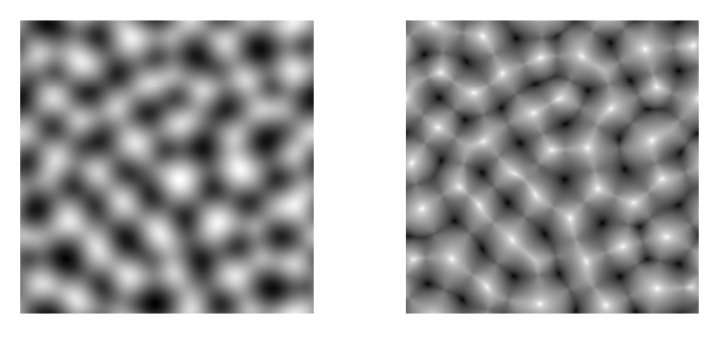}
	\end{minipage}
}
\subfigure[T=200]{
	\begin{minipage}[c]{0.4\textwidth}
		\includegraphics[width=1\textwidth]{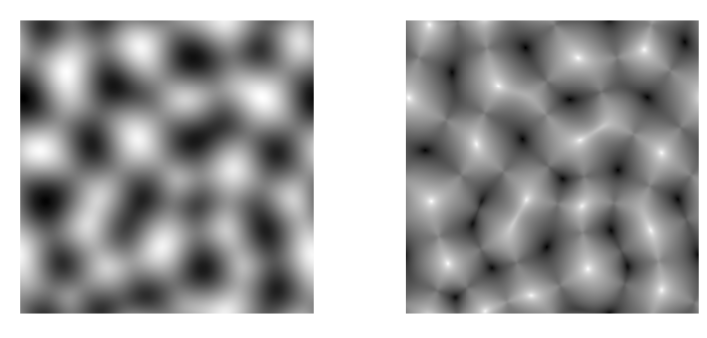}
	\end{minipage}
}
	{\caption{The left subfigure represents $\phi$ and the right subfigure represents $\Delta \phi$. Each snapshot is taken at $T = 2, 5, 10, 30, 50, 200$.}\label{MBE}}
\end{figure}

Furthermore, we perform numerical simulations of coarsening dynamics in 3D by assigning a random number to each grid point ranging from $-0.001$ to $0.001$ as the initial condition.
The simulations are conducted in the domain $[0, L]^3$ with $L = \pi$.
The space is discretized using $128 \times 128 \times 128$ grid points, and the time step is set to $\Delta t=4.3\times10^{-3}$.
Fig. \ref{MBEEeta3D} shows the energy evolution and the Lagrange multiplier $\eta$ for the MBE model without slope selection in 3D.
Fig. \ref{MBE3D} displays three isosurfaces for $\phi=-0.025,0$ and $0.025$, colored in blue, green, and red, respectively.

\begin{figure}[H]
	\centering
	\subfigure[]{
		\begin{minipage}[c]{0.4\textwidth}
			\includegraphics[width=1\textwidth]{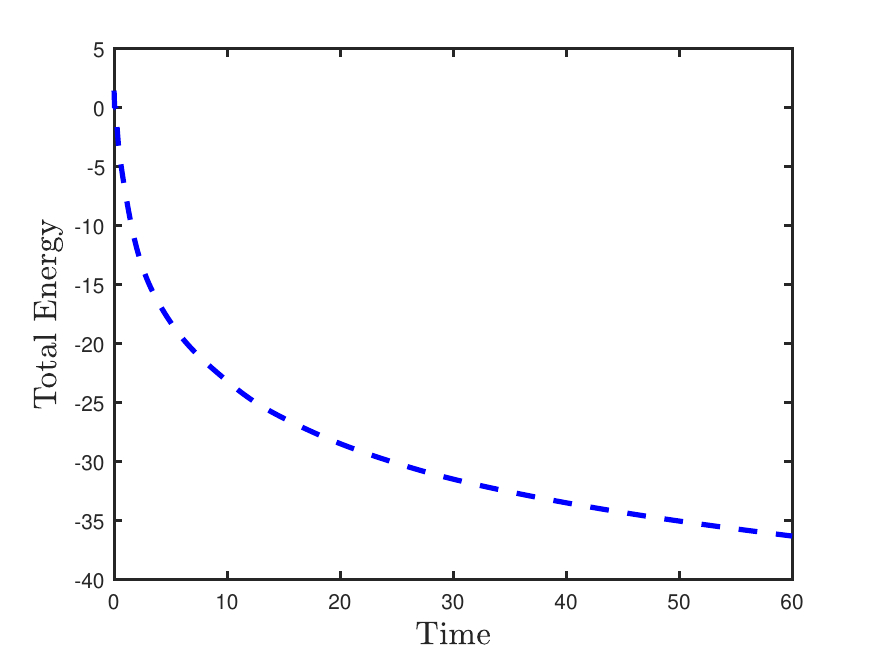}
		\end{minipage}
	}
	\subfigure[]{
		\begin{minipage}[c]{0.4\textwidth}
			\includegraphics[width=1\textwidth]{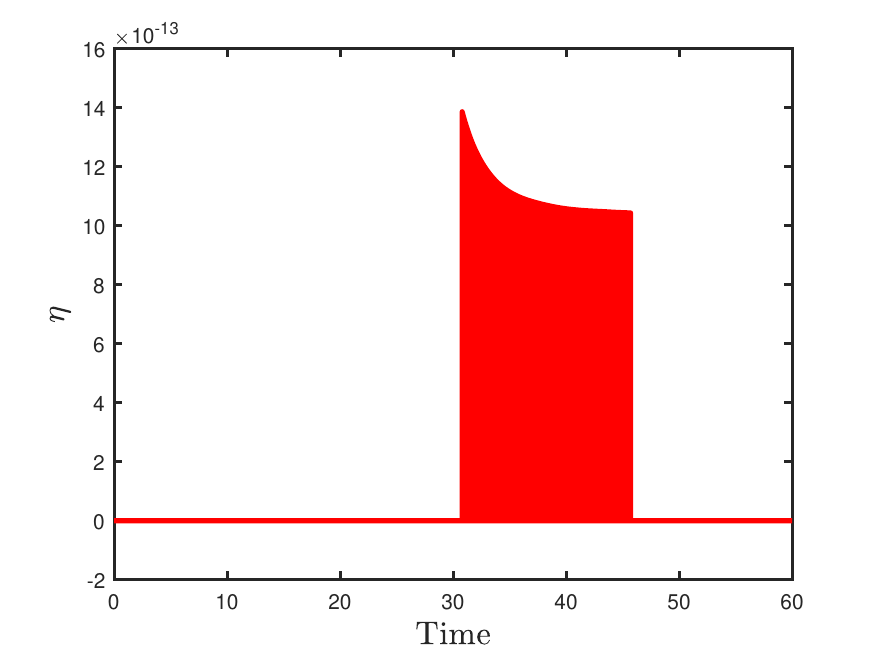}
		\end{minipage}
	}
	{\caption{The temporal evolution of energy and the Lagrange multiplier $eta$ for 3D MBE model without slope selection.}\label{MBEEeta3D}}
\end{figure}

\begin{figure}[!h]
	\centering
	\subfigure[T=1]{
		\begin{minipage}[c]{0.3\textwidth}
			\includegraphics[width=1\textwidth]{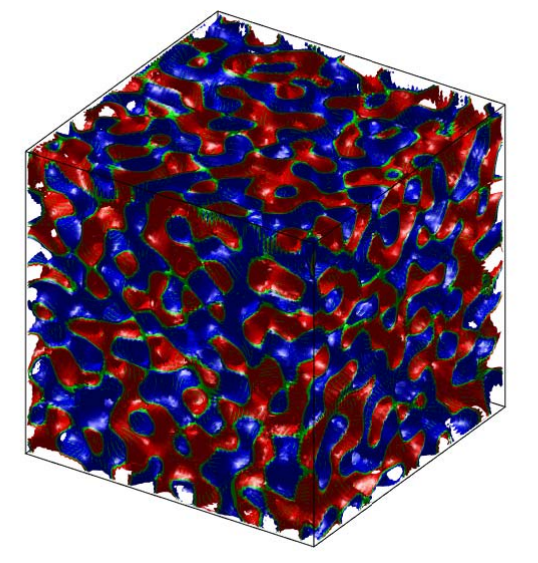}
		\end{minipage}
	}
	\subfigure[T=5]{
		\begin{minipage}[c]{0.3\textwidth}
			\includegraphics[width=1\textwidth]{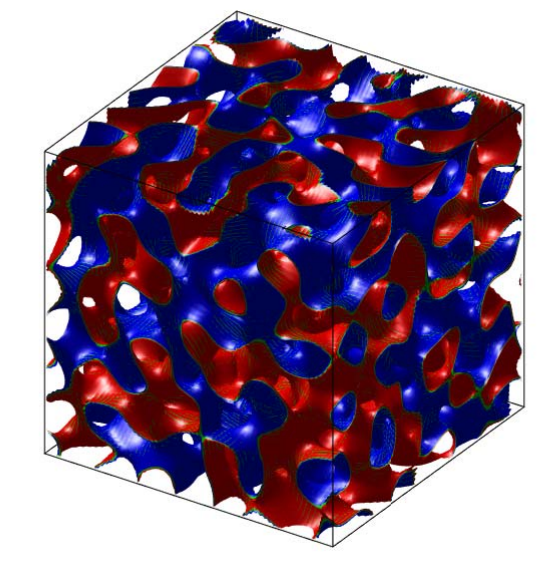}
		\end{minipage}
	}
	\subfigure[T=10]{
		\begin{minipage}[c]{0.3\textwidth}
			\includegraphics[width=1\textwidth]{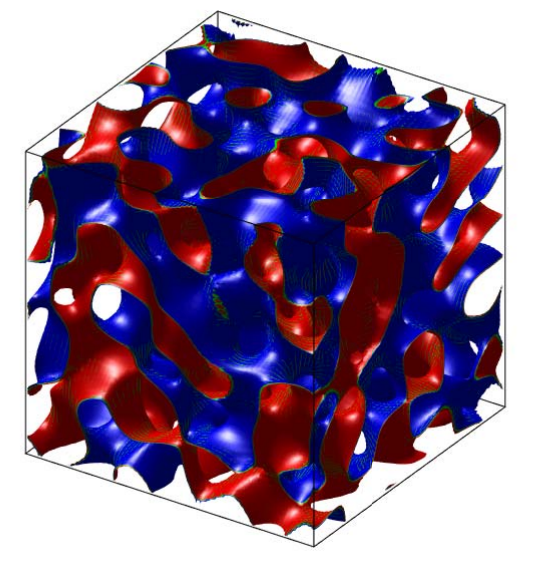}
		\end{minipage}
	}
	\subfigure[T=20]{
		\begin{minipage}[c]{0.3\textwidth}
			\includegraphics[width=1\textwidth]{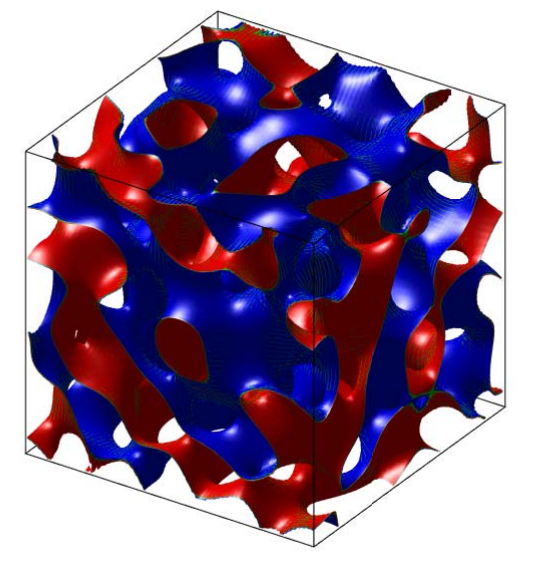}
		\end{minipage}
	}
	\subfigure[T=40]{
		\begin{minipage}[c]{0.3\textwidth}
			\includegraphics[width=1\textwidth]{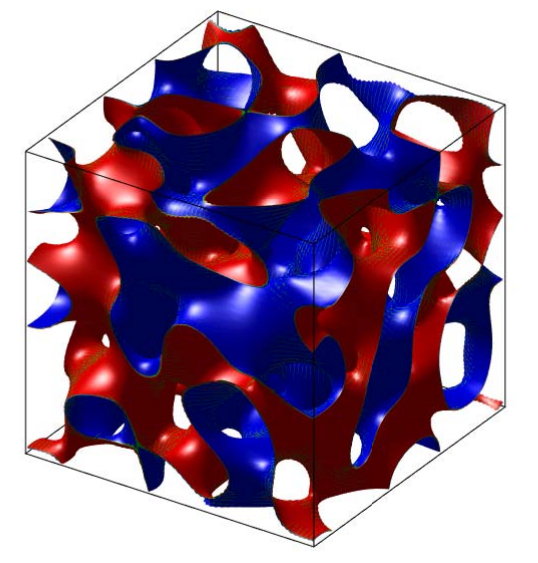}
		\end{minipage}
	}
	\subfigure[T=60]{
		\begin{minipage}[c]{0.3\textwidth}
			\includegraphics[width=1\textwidth]{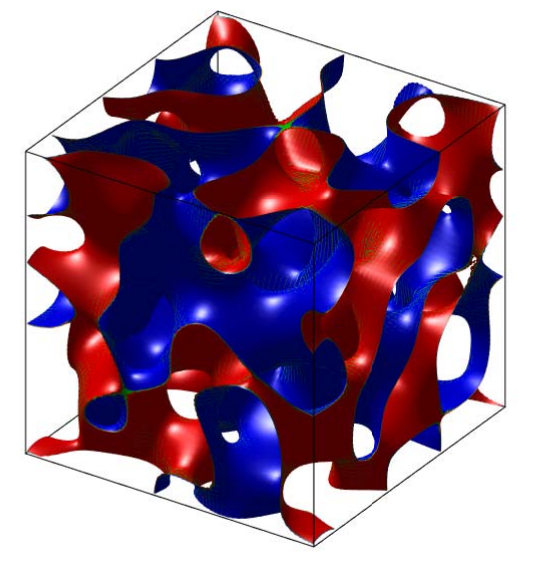}
		\end{minipage}
	}
	{\caption{The three isosurfaces of numerical solutions of $\phi= -0.025, 0$ and $0.025$ of 3D coarsening dynamics. Each snapshot is taken at $T = 1, 5, 10, 20, 40, 60$. }\label{MBE3D}}
\end{figure}


\subsection{Ternary Cahn-Hilliard phase-field model}
The numerical approximation of the three-phase Cahn-Hilliard system  presents a significant challenge due to its nonlinear term.
In this subsection, we will solve the three-phase Cahn-Hilliard phase-field model using a semi-implicit approach with a Lagrange multiplier, as developed by Boyer et al. \cite{hong2023high,boyer2006study,boyer2011numerical}.
	
The model can be expressed as follows. The incompressibility condition	
links the three variables $\phi_1$, $\phi_2$, and $\phi_3$:
	$$
	\phi_1+\phi_2+\phi_3=1,
	$$
and the three-phase free energy is given by a specific expression.
\begin{equation}\label{energy}
	\mathcal{E}\left[\phi_1, \phi_2, \phi_3\right]=\frac{3 \epsilon^2}{8} \sum_{l=1}^3 \int_{\Omega} \Sigma_l\left|\nabla \phi_l\right|^2 \mathrm{~d} \mathbf{x}+12 \int_{\Omega} F\left(\phi_1, \phi_2, \phi_3\right) \mathrm{d} \mathbf{x},
\end{equation}
The surface tension parameters $\sigma_{12}, \sigma_{13}$ and $\sigma_{23}$ satisfy certain requirements to maintain consistency with the two-phase system algebraically
$$
\Sigma_1=\sigma_{12}+\sigma_{13}-\sigma_{23}, \quad \Sigma_2=\sigma_{12}+\sigma_{23}-\sigma_{13}, \quad \Sigma_3=\sigma_{13}+\sigma_{23}-\sigma_{12} .
$$

The volume conservation constraint $\phi_3=1-\phi_1-\phi_2$ allows us to recast the energy functional \eqref{energy} as follows.
	$$
	\mathcal{E}\left[\phi_1, \phi_2\right]=\frac{3 \epsilon^2}{8} \int_{\Omega}\left(\Sigma_1\left|\nabla \phi_1\right|^2+\Sigma_2\left|\nabla \phi_2\right|^2+\Sigma_3\left|\nabla \phi_1+\nabla \phi_2\right|^2\right) \mathrm{d} \mathbf{x}+12 \int_{\Omega} F\left(\phi_1, \phi_2\right) \mathrm{d} \mathbf{x},
	$$
where $F\left(\phi_1, \phi_2\right)$ is given by
	$$
	F\left(\phi_1, \phi_2\right)=\frac{\Sigma_1}{2} \phi_1^2\left(1-\phi_1\right)^2+\frac{\Sigma_2}{2} \phi_2^2\left(1-\phi_2\right)^2+\frac{\Sigma_3}{2}\left(\phi_1+\phi_2\right)^2\left(1-\phi_1-\phi_2\right)^2+3 \Lambda \phi_1^2 \phi_2^2\left(1-\phi_1-\phi_2\right)^2 .
	$$
	Here, $\Lambda$ is a non-negative constant.
The coupled Cahn-Hilliard model defines the dynamic equation as follows.
\begin{equation}\label{cch}
	\begin{aligned}
		& \partial_t \phi_l=M \Delta \frac{\mu_l}{\Sigma_l}, \quad l=1,2, \\
		& \mu_1=-\frac{3 \epsilon^2}{4}\left(\Sigma_1+\Sigma_3\right) \Delta \phi_1-\frac{3 \epsilon^2}{4} \Sigma_3 \Delta \phi_2+12 \frac{\partial F\left(\phi_1, \phi_2\right)}{\partial \phi_1}, \\
		& \mu_2=-\frac{3 \epsilon^2}{4} \Sigma_3 \Delta \phi_1-\frac{3 \epsilon^2}{4}\left(\Sigma_2+\Sigma_3\right) \Delta \phi_2+12 \frac{\partial F\left(\phi_1, \phi_2\right)}{\partial \phi_2},
	\end{aligned}
\end{equation}
	where the initial conditions are given by
	$$
	\left.\phi_l(\mathbf{x}, t)\right|_{t=0}=\phi_l^0(\mathbf{x}), \quad l=1,2, \quad \phi_3^0=1-\phi_1^0(\mathbf{x})-\phi_2^0(\mathbf{x}) .
	$$
By taking the inner product of the first equation in \eqref{cch} with $\mu_1$, and $\mu_2$, the second equation in \eqref{cch} with $-\partial_t \phi_1$, the third equation in \eqref{cch} with $-\partial_t \phi_2$, we obtain immediately the energy dissipation law:
	$$
	\frac{\mathrm{d} \mathcal{E}}{\mathrm{d} t}=-M\left(\frac{1}{\Sigma_1}\left\|\nabla \mu_1\right\|^2+\frac{1}{\Sigma_2}\left\|\nabla \mu_2\right\|^2\right) \leq 0 .
	$$
Similarly, unconditionally stable numerical schemes can be easily constructed and efficiently implemented as follows:
\begin{scheme}
\begin{equation}\label{cch1}
	\begin{aligned}
		 \frac{\phi_l^{n+1}-\phi_l^n}{\Delta t}= \frac{M}{\Sigma_l}\Delta \mu_l^{n+\frac{1}{2}}, \quad l=1,2,& \\
		 \mu_1^{n+\frac{1}{2}}=-\frac{3 \epsilon^2}{4}\left(\Sigma_1+\Sigma_3\right) \Delta \phi_1^{n+\frac{1}{2}}-\frac{3 \epsilon^2}{4} &\Sigma_3 \Delta \phi_2^{n+\frac{1}{2}}+12(1+\eta^{n+\frac12})\frac{\partial F}{\partial \phi_1}\left(\phi_1^{*,n+\frac12}, \phi_2^{*,n+\frac12}\right), \\
		 \mu_2^{n+\frac{1}{2}}=-\frac{3 \epsilon^2}{4} \Sigma_3 \Delta \phi_1^{n+\frac{1}{2}}
		 -\frac{3 \epsilon^2}{4}(\Sigma_2+\Sigma_3)& \Delta \phi_2^{n+\frac{1}{2}}+12(1+\eta^{n+\frac12}) \frac{\partial F}{\partial \phi_2}\left(\phi_1^{*,n+\frac12}, \phi_2^{*,n+\frac12}\right),\\
		(F(\phi_1^{n+1},\phi_2^{n+1}),1)-(F(\phi_1^{n},\phi_2^{n}),1)=
&(1+\eta^{n+\frac12})\left(\frac{\partial F}{\partial \phi_1}\left(\phi_1^{*,n+\frac12}, \phi_2^{*,n+\frac12}\right),\phi_1^{n+1}-\phi_1^{n}\right)\\
&+(1+\eta^{n+\frac12}) \left(\frac{\partial F}{\partial \phi_2}\left(\phi_1^{*,n+\frac12},\phi_2^{*,n+\frac12}\right),\phi_2^{n+1}-\phi_2^{n}\right),
	\end{aligned}
\end{equation}
where ${\phi}_1^{*,n+\frac12}=\frac32\phi_1^n-\frac12\phi_1^{n-1}$, \ ${\phi}_2^{*,n+\frac12}=\frac32\phi_2^n-\frac12\phi_2^{n-1}$.
\end{scheme}

\begin{example}(Accuracy and energy stability test) To demonstrate the accuracy and energy stability of the ternary Cahn-Hilliard system, we present an example with the following specific initial conditions:
	$$
	\begin{aligned}
		& \phi_i^0(x, y)=\frac{1}{2}\left(1+\tanh \left(\frac{r_i-\sqrt{\left(x-x_i\right)^2+\left(y-y_i\right)^2}}{\epsilon}\right)\right), \quad i=1,2, \\
		& \phi_3^0(x, y)=1-\phi_1^0(x, y)-\phi_2^0(x, y),
	\end{aligned}
	$$
where $r_1=r_2=0.35,\ x_1=1.37,\ x_2=0.63$ and $y_1=y_2=1.0$. 
The computational domain is $\Omega=[0,2] \times[0,2]$ 
and we use $256\times256$ modes to discretize the space variables.
The coefficients $M=10^{-5},\ \epsilon=0.02$, and $\Lambda=7$ are chosen.
The corresponding $L^\infty$ errors computed by Scheme 2 at $T=0.2$ are 
summarized in Fig. \ref{TCHo}, with a reference solution using $\Delta t=1.0 \times 10^{-5}$. Fig. \ref{TCHee} demonstrates the unconditional stability of the scheme with $\Delta t=1.0 \times 10^{-3}$.

\begin{figure}[htp]
	\centering
	\subfigure[]{
		\begin{minipage}[c]{0.4\textwidth}
			\includegraphics[width=1\textwidth]{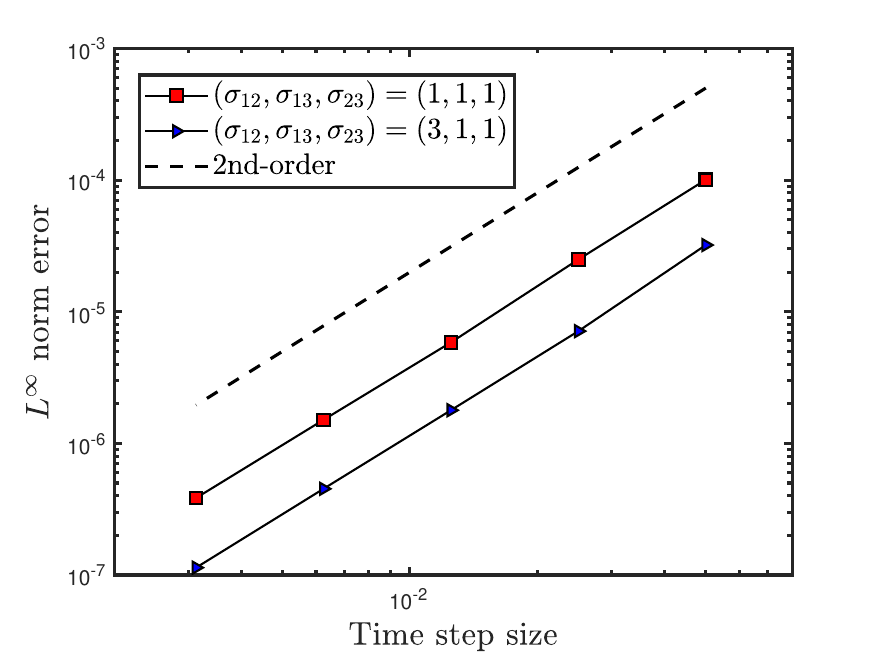}\label{TCHo}
		\end{minipage}
	}
	\subfigure[]{
		\begin{minipage}[c]{0.4\textwidth}
			\includegraphics[width=1\textwidth]{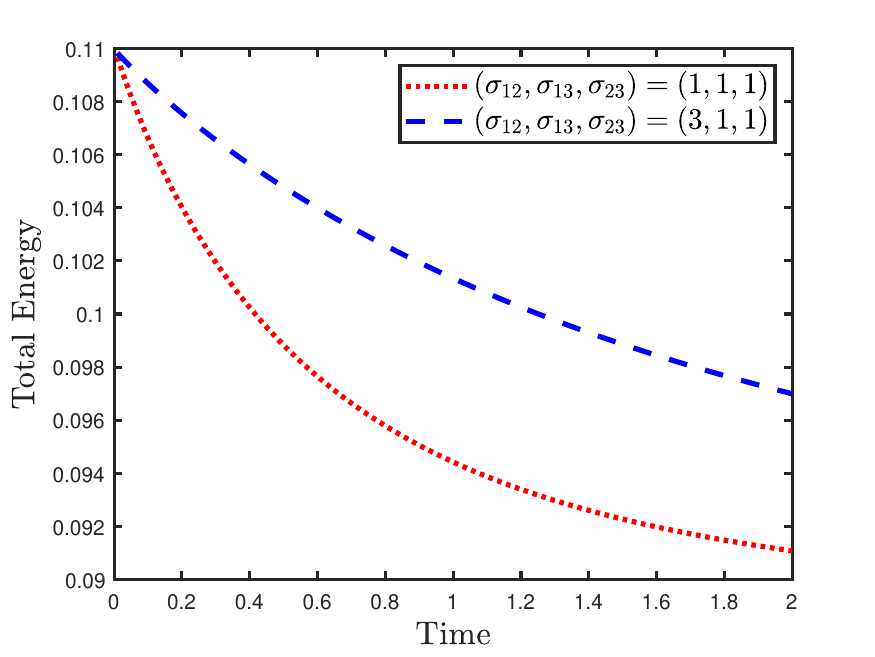}\label{TCHee}
		\end{minipage}
	}
	{\caption{Numerical convergence rate (left) and energy stability test (right) for the ternary Cahn–Hilliard phase-field model.}\label{TCHO2}}
\end{figure}

%
\end{example}

\begin{example}(Spinodal decomposition)
Furthermore, we consider an example of phase separation (or spinodal decomposition), where an initial homogeneous three-phase mixture evolves into a three-phase state as concentration fluctuations grow. 
The initial conditions are set as
	$$
	\begin{aligned}
		& \phi_1(\mathbf{x}, 0)=0.5\left(\frac{y}{2}+0.25\right)+0.001 \operatorname{rand}(x, y), \\
		& \phi_2(\mathbf{x}, 0)=0.5\left(\frac{y}{2}+0.25\right)+0.001 \operatorname{rand}(x, y), \\
		& \phi_3(\mathbf{x}, 0)=1-\phi_1(\mathbf{x}, 0)-\phi_2(\mathbf{x}, 0),
	\end{aligned}
	$$
	where  $\text{rand}(x,y)$ represents random data between $[-1, 1]^2$. 
	The computational domain is $\Omega=[0,2] \times[0,1]$, and we use $256\times 128$ modes to discretize the space variables. The parameters $M=10^{-3}$ and $\epsilon=0.025$ are chosen. 
	Fig.\ref{TCH3} illustrates the evolution of energy and the Lagrange multiplier $eta$ with time for the ternary Cahn–Hilliard phase-field model with $\Delta t=1.0\times 10^{-4}$.
	Additionally, different surface tension strengths, including  $\left(\sigma_{12}, \sigma_{13}, \sigma_{23}\right)=(1,1,1)$ and $\left(\sigma_{12}, \sigma_{13}, \sigma_{23}\right)=(3,1,1)$ are tested and presented in Fig. \ref{TCH1} and Fig. \ref{TCH2}.

\begin{figure}[H]
	\centering
		\begin{minipage}[c]{0.65\textwidth}
			\includegraphics[width=1\textwidth]{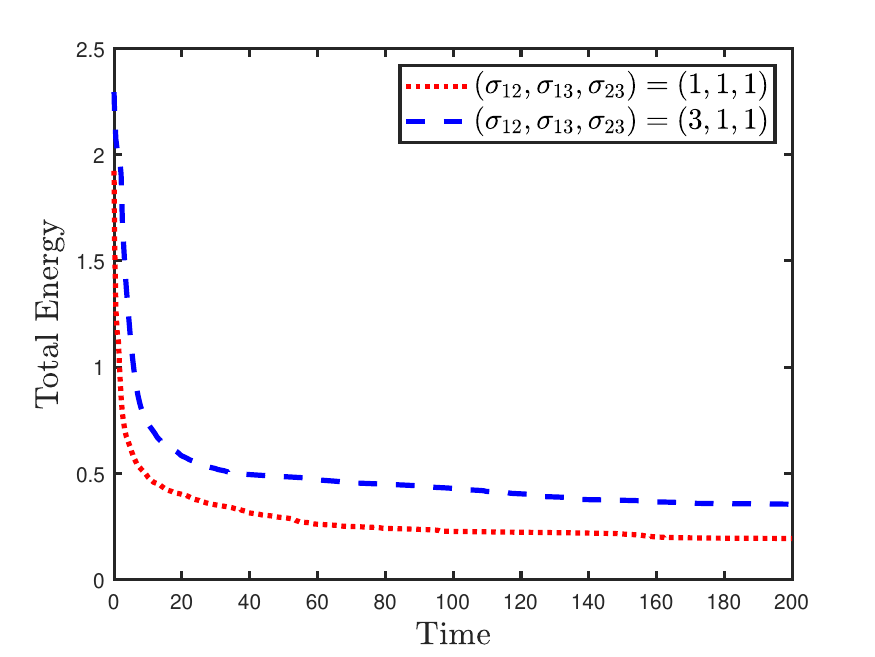}
		\end{minipage}
	{\caption{The temporal evolution of energy and the Lagrange multiplier $eta$ for the ternary Cahn–Hilliard phase-field model.}\label{TCH3}}
\end{figure}
	
\begin{figure}[htp]
	\centering
	\subfigure[T=5]{
		\begin{minipage}[c]{0.3\textwidth}
			\includegraphics[width=1\textwidth]{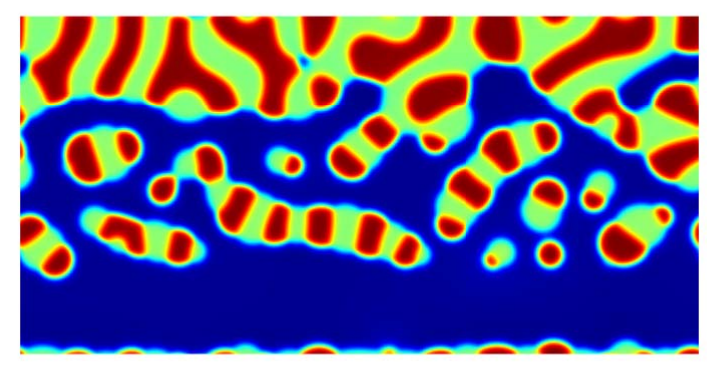}
		\end{minipage}
	}
	\subfigure[T=10]{
		\begin{minipage}[c]{0.3\textwidth}
			\includegraphics[width=1\textwidth]{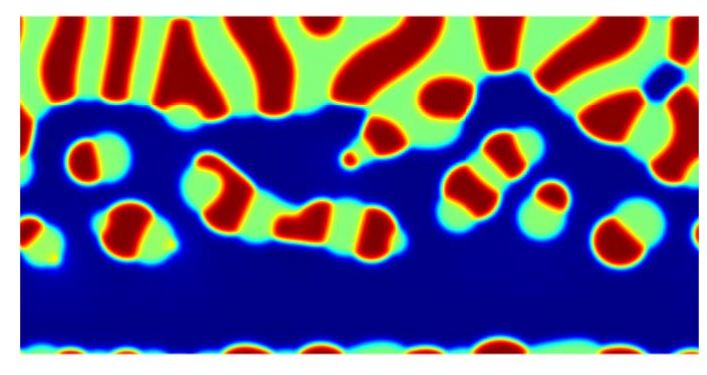}
		\end{minipage}
	}
	\subfigure[T=20]{
		\begin{minipage}[c]{0.3\textwidth}
			\includegraphics[width=1\textwidth]{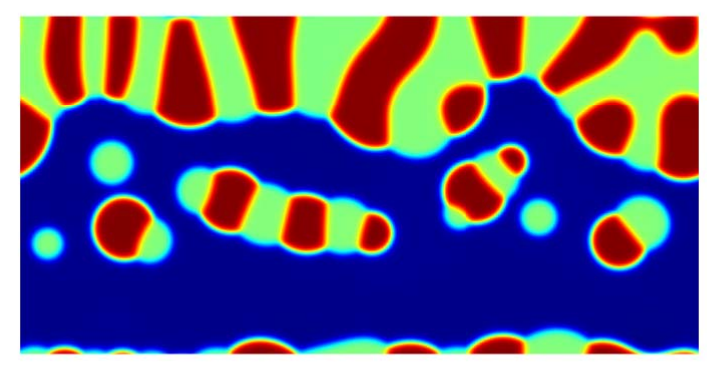}
		\end{minipage}
	}
	\subfigure[T=50]{
		\begin{minipage}[c]{0.3\textwidth}
			\includegraphics[width=1\textwidth]{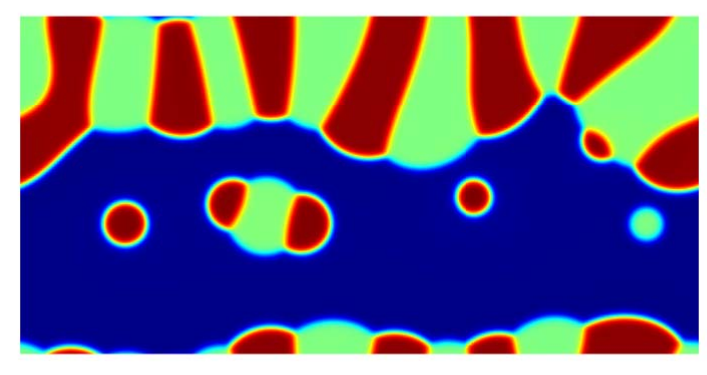}
		\end{minipage}
	}
	\subfigure[T=100]{
		\begin{minipage}[c]{0.3\textwidth}
			\includegraphics[width=1\textwidth]{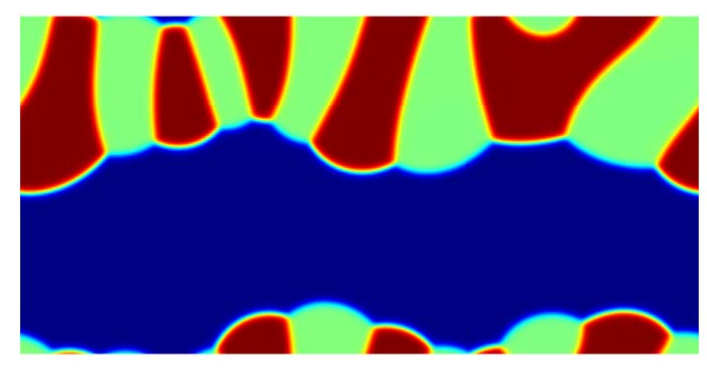}
		\end{minipage}
	}
	\subfigure[T=200]{
		\begin{minipage}[c]{0.3\textwidth}
			\includegraphics[width=1\textwidth]{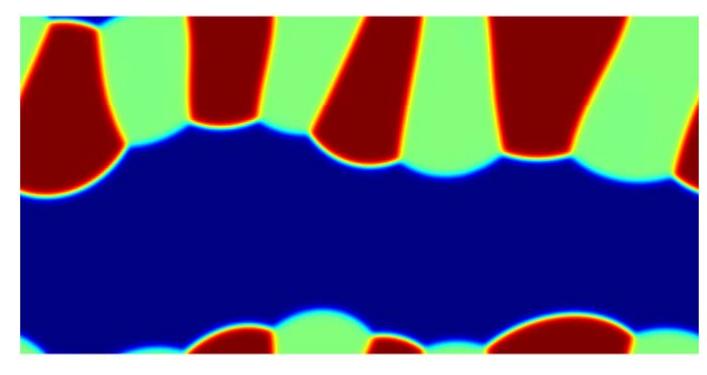}
		\end{minipage}
	}
	{\caption{Dynamical evolution of the profile $\frac{1}{2}\phi_1+\phi_2$ for the spinodal decomposition examples with $\left(\sigma_{12}, \sigma_{13}, \sigma_{23}\right)=(1,1,1)$. }\label{TCH1}}
\end{figure}

\begin{figure}[htp]
	\centering
	\subfigure[T=5]{
		\begin{minipage}[c]{0.3\textwidth}
			\includegraphics[width=1\textwidth]{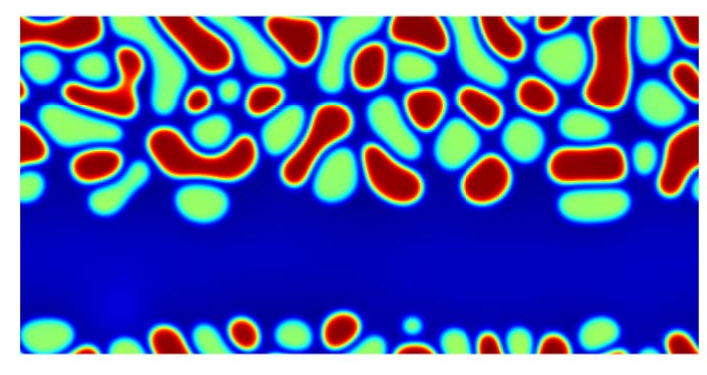}
		\end{minipage}
	}
	\subfigure[T=10]{
		\begin{minipage}[c]{0.3\textwidth}
			\includegraphics[width=1\textwidth]{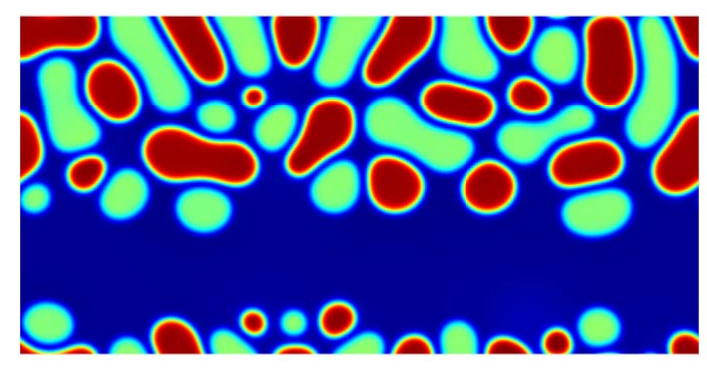}
		\end{minipage}
	}
	\subfigure[T=20]{
		\begin{minipage}[c]{0.3\textwidth}
			\includegraphics[width=1\textwidth]{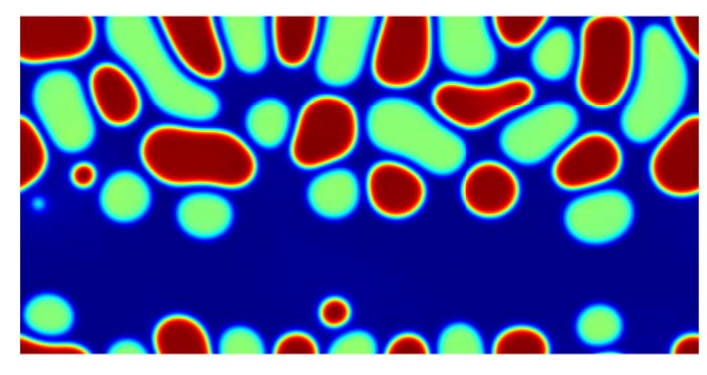}
		\end{minipage}
	}
	\subfigure[T=50]{
		\begin{minipage}[c]{0.3\textwidth}
			\includegraphics[width=1\textwidth]{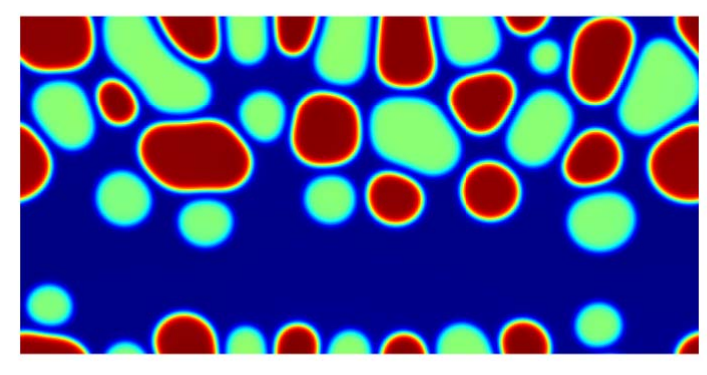}
		\end{minipage}
	}
	\subfigure[T=100]{
		\begin{minipage}[c]{0.3\textwidth}
			\includegraphics[width=1\textwidth]{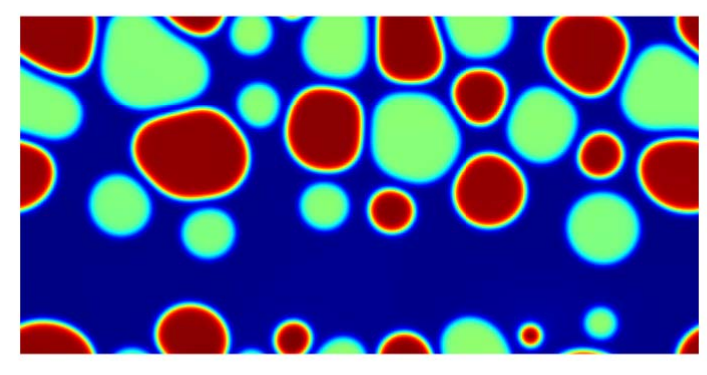}
		\end{minipage}
	}
	\subfigure[T=200]{
		\begin{minipage}[c]{0.3\textwidth}
			\includegraphics[width=1\textwidth]{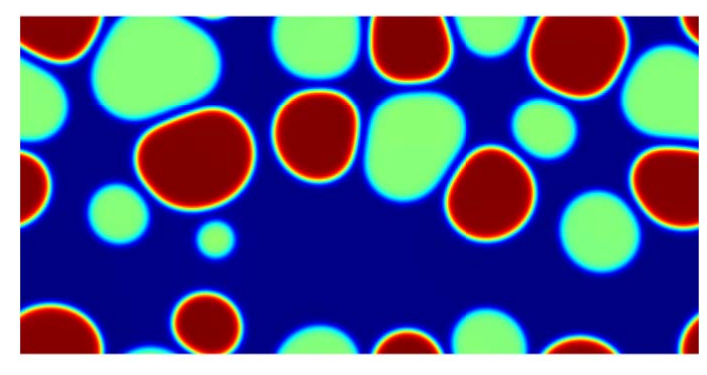}
		\end{minipage}
	}
	{\caption{Dynamical evolution of the profile $\frac{1}{2}\phi_1+\phi_2$ for the spinodal decomposition examples with $\left(\sigma_{12}, \sigma_{13}, \sigma_{23}\right)=(3,1,1)$. }\label{TCH2}}
\end{figure}	
%
\end{example}

\section{Conclusion}
In this paper, we considered a new and efficient method to modify the recently developed Lagrange multiplier approach in \cite{cheng2020new} for dissipative systems. The new proposed method, called semi-implicit combined Lagrange multiplier approach, includes three steps to simplify the solving process of the introduced Lagrange multiplier to save computational costs. A series of second- and high-order numerical schemes were given sequentially and have been proved to satisfy original dissipative law. In further, we will consider to apply the same algorithm to construct modified SAV approach with unconditionally original energy dissipative law.      
\section*{Acknowledgement}
No potential conflict of interest was reported by the author. We would like to acknowledge the assistance of volunteers in putting together this example manuscript and supplement.
\bibliographystyle{siamplain}
\bibliography{Reference}

\begin{thebibliography}{10}

\bibitem{baskaran2013convergence}
{\sc A.~Baskaran, J.~S. Lowengrub, C.~Wang, and S.~M. Wise}, {\em Convergence
  analysis of a second order convex splitting scheme for the modified phase
  field crystal equation}, SIAM Journal on Numerical Analysis, 51 (2013),
  pp.~2851--2873.

\bibitem{boyer2006study}
{\sc F.~Boyer and C.~Lapuerta}, {\em Study of a three component
  {C}ahn-{H}illiard flow model}, ESAIM: Mathematical Modelling and Numerical
  Analysis, 40 (2006), pp.~653--687.

\bibitem{boyer2011numerical}
{\sc F.~Boyer and S.~Minjeaud}, {\em Numerical schemes for a three component
  {C}ahn-{H}illiard model}, ESAIM: Mathematical Modelling and Numerical
  Analysis, 45 (2011), pp.~697--738.

\bibitem{cahn1958free}
{\sc J.~W. Cahn and J.~E. Hilliard}, {\em Free energy of a nonuniform system.
  i. interfacial free energy}, The Journal of chemical physics, 28 (1958),
  pp.~258--267.

\bibitem{cahn1959free}
{\sc J.~W. Cahn and J.~E. Hilliard}, {\em Free energy of a nonuniform system.
  iii. nucleation in a two-component incompressible fluid}, The Journal of
  chemical physics, 31 (1959), pp.~688--699.

\bibitem{chen2002phase}
{\sc L.-Q. Chen}, {\em Phase-field models for microstructure evolution}, Annual
  review of materials research, 32 (2002), pp.~113--140.

\bibitem{chen1998applications}
{\sc L.~Q. Chen and J.~Shen}, {\em Applications of semi-implicit
  {F}ourier-spectral method to phase field equations}, Computer Physics
  Communications, 108 (1998), pp.~147--158.

\bibitem{cheng2020new}
{\sc Q.~Cheng, C.~Liu, and J.~Shen}, {\em A new {L}agrange {M}ultiplier
  approach for gradient flows}, Computer Methods in Applied Mechanics and
  Engineering, 367 (2020), p.~113070.

\bibitem{cheng2022new}
{\sc Q.~Cheng and J.~Shen}, {\em A new lagrange multiplier approach for
  constructing structure preserving schemes, ii. bound preserving}, SIAM
  Journal on Numerical Analysis, 60 (2022), pp.~970--998.

\bibitem{cho1975molecular}
{\sc A.~Y. Cho and J.~Arthur}, {\em Molecular beam epitaxy}, Progress in solid
  state chemistry, 10 (1975), pp.~157--191.

\bibitem{du2019maximum}
{\sc Q.~Du, L.~Ju, X.~Li, and Z.~Qiao}, {\em Maximum principle preserving
  exponential time differencing schemes for the nonlocal {A}llen--{C}ahn
  equation}, SIAM Journal on numerical analysis, 57 (2019), pp.~875--898.

\bibitem{du2021maximum}
{\sc Q.~Du, L.~Ju, X.~Li, and Z.~Qiao}, {\em Maximum bound principles for a
  class of semilinear parabolic equations and exponential time-differencing
  schemes}, SIAM Review, 63 (2021), pp.~317--359.

\bibitem{du1991numerical}
{\sc Q.~Du and R.~A. Nicolaides}, {\em Numerical analysis of a continuum model
  of phase transition}, SIAM Journal on Numerical Analysis, 28 (1991),
  pp.~1310--1322.

\bibitem{elder2002modeling}
{\sc K.~Elder, M.~Katakowski, M.~Haataja, and M.~Grant}, {\em Modeling
  elasticity in crystal growth}, Physical review letters, 88 (2002), p.~245701.

\bibitem{eyre1998unconditionally}
{\sc D.~J. Eyre}, {\em Unconditionally gradient stable time marching the
  {C}ahn-{H}illiard equation}, MRS Online Proceedings Library (OPL), 529
  (1998), p.~39.

\bibitem{fallah2012phase}
{\sc V.~Fallah, M.~Amoorezaei, N.~Provatas, S.~Corbin, and A.~Khajepour}, {\em
  Phase-field simulation of solidification morphology in laser powder
  deposition of ti--nb alloys}, Acta Materialia, 60 (2012), pp.~1633--1646.

\bibitem{feng2003numerical}
{\sc X.~Feng and A.~Prohl}, {\em Numerical analysis of the allen-cahn equation
  and approximation for mean curvature flows}, Numerische Mathematik, 94
  (2003), pp.~33--65.

\bibitem{feng2004error}
{\sc X.~Feng and A.~Prohl}, {\em Error analysis of a mixed finite element
  method for the cahn-hilliard equation}, Numerische Mathematik, 99 (2004),
  pp.~47--84.

\bibitem{hong2023high}
{\sc Q.~Hong, Q.~Wang, and Y.~Gong}, {\em High-order supplementary variable
  methods for thermodynamically consistent partial differential equations},
  Computer Methods in Applied Mechanics and Engineering, 416 (2023), p.~116306.

\bibitem{huang2020highly}
{\sc F.~Huang, J.~Shen, and Z.~Yang}, {\em A highly efficient and accurate new
  scalar auxiliary variable approach for gradient flows}, SIAM Journal on
  Scientific Computing, 42 (2020), pp.~A2514--A2536.

\bibitem{ilmanen1993convergence}
{\sc T.~Ilmanen}, {\em Convergence of the allen-cahn equation to brakke's
  motion by mean curvature}, Journal of Differential Geometry, 38 (1993),
  pp.~417--461.

\bibitem{joyce1985molecular}
{\sc B.~Joyce}, {\em Molecular beam epitaxy}, Reports on Progress in Physics,
  48 (1985), p.~1637.

\bibitem{ju2018energy}
{\sc L.~Ju, X.~Li, Z.~Qiao, and H.~Zhang}, {\em Energy stability and error
  estimates of exponential time differencing schemes for the epitaxial growth
  model without slope selection}, Mathematics of Computation, 87 (2018),
  pp.~1859--1885.

\bibitem{keller1970initiation}
{\sc E.~F. Keller and L.~A. Segel}, {\em Initiation of slime mold aggregation
  viewed as an instability}, Journal of theoretical biology, 26 (1970),
  pp.~399--415.

\bibitem{liu2022novel}
{\sc Z.~Liu and X.~Li}, {\em A novel lagrange multiplier approach with
  relaxation for gradient flows}, arXiv preprint arXiv:2210.02723,  (2022).

\bibitem{lowen2010phase}
{\sc H.~L{\"o}wen}, {\em A phase-field-crystal model for liquid crystals},
  Journal of Physics: Condensed Matter, 22 (2010), p.~364105.

\bibitem{osher1988fronts}
{\sc S.~Osher and J.~A. Sethian}, {\em Fronts propagating with
  curvature-dependent speed: Algorithms based on hamilton-jacobi formulations},
  Journal of computational physics, 79 (1988), pp.~12--49.

\bibitem{rudin1992nonlinear}
{\sc L.~I. Rudin, S.~Osher, and E.~Fatemi}, {\em Nonlinear total variation
  based noise removal algorithms}, Physica D: nonlinear phenomena, 60 (1992),
  pp.~259--268.

\bibitem{schwoebel1966step}
{\sc R.~L. Schwoebel and E.~J. Shipsey}, {\em Step motion on crystal surfaces},
  Journal of Applied Physics, 37 (1966), pp.~3682--3686.

\bibitem{shen2018scalar}
{\sc J.~Shen, J.~Xu, and J.~Yang}, {\em The scalar auxiliary variable ({SAV})
  approach for gradient flows}, Journal of Computational Physics, 353 (2018),
  pp.~407--416.

\bibitem{ShenA}
{\sc J.~Shen, J.~Xu, and J.~Yang}, {\em A new class of efficient and robust
  energy stable schemes for gradient flows}, SIAM Review, 61 (2019),
  pp.~474--506.

\bibitem{shen2010numerical}
{\sc J.~Shen and X.~Yang}, {\em Numerical approximations of {A}llen-{C}ahn and
  {C}ahn-{H}illiard equations}, Discrete Contin. Dyn. Syst, 28 (2010),
  pp.~1669--1691.

\bibitem{villain1991continuum}
{\sc J.~Villain}, {\em Continuum models of crystal growth from atomic beams
  with and without desorption}, Journal de physique I, 1 (1991), pp.~19--42.

\bibitem{wang2020application}
{\sc Q.~Wang, G.~Zhang, Y.~Li, Z.~Hong, D.~Wang, and S.~Shi}, {\em Application
  of phase-field method in rechargeable batteries}, npj Computational
  Materials, 6 (2020), p.~176.

\bibitem{wise2009energy}
{\sc S.~M. Wise, C.~Wang, and J.~S. Lowengrub}, {\em An energy-stable and
  convergent finite-difference scheme for the phase field crystal equation},
  SIAM Journal on Numerical Analysis, 47 (2009), pp.~2269--2288.

\bibitem{WuPhase}
{\sc K.-A. Wu, A.~Adland, and A.~Karma}, {\em Phase-field-crystal model for fcc
  ordering}, Physical Review E Statistical Nonlinear \& Soft Matter Physics, 81
  (2010), p.~061601.

\bibitem{xu2006stability}
{\sc C.~Xu and T.~Tang}, {\em Stability analysis of large time-stepping methods
  for epitaxial growth models}, SIAM Journal on Numerical Analysis, 44 (2006),
  pp.~1759--1779.

\bibitem{yang2018linear}
{\sc X.~Yang, J.~Zhao, and X.~He}, {\em Linear, second order and
  unconditionally energy stable schemes for the viscous {C}ahn--{H}illiard
  equation with hyperbolic relaxation using the invariant energy quadratization
  method}, Journal of Computational and Applied Mathematics, 343 (2018),
  pp.~80--97.

\bibitem{yang2017numerical}
{\sc X.~Yang, J.~Zhao, and Q.~Wang}, {\em Numerical approximations for the
  molecular beam epitaxial growth model based on the invariant energy
  quadratization method}, Journal of Computational Physics, 333 (2017),
  pp.~104--127.

\bibitem{zhang2022generalized}
{\sc Y.~Zhang and J.~Shen}, {\em A generalized {SAV} approach with relaxation
  for dissipative systems}, Journal of Computational Physics,  (2022),
  p.~111311.

\bibitem{zhao2017numerical}
{\sc J.~Zhao, Q.~Wang, and X.~Yang}, {\em Numerical approximations for a phase
  field dendritic crystal growth model based on the invariant energy
  quadratization approach}, International Journal for Numerical Methods in
  Engineering, 110 (2017), pp.~279--300.

\bibitem{zuo2015phase}
{\sc P.~Zuo and Y.-P. Zhao}, {\em A phase field model coupling lithium
  diffusion and stress evolution with crack propagation and application in
  lithium ion batteries}, Physical Chemistry Chemical Physics, 17 (2015),
  pp.~287--297.

\end{thebibliography}

\end{document}